\documentclass[11pt]{article}

\makeatletter

\makeatother
\newcounter{parentnumber}

\usepackage{amsmath, amssymb, amscd, amsthm, amsfonts}
\usepackage{graphicx}
\usepackage{hyperref}
\usepackage{soul}
\usepackage{color}
\usepackage{tikz}
\usepackage{wrapfig}
\usepackage{subcaption} 

\oddsidemargin 0pt
\evensidemargin 0pt
\marginparwidth 40pt
\marginparsep 10pt
\topmargin -20pt
\headsep 10pt
\textheight 8.7in
\textwidth 6.65in
\linespread{1.2}

\title{On the illumination of centrally symmetric cap bodies in small dimensions}
\author{Ilya Ivanov \hfill and \hfill Cameron Strachan}
\date{}

\newtheorem{definition}{Definition}
\newtheorem{rmk}{Remark}
\newtheorem{theorem}{Theorem}
\newtheorem{lemma}[theorem]{Lemma}
\newtheorem{conjecture}[theorem]{Conjecture}

\newcommand{\norm}[1]{\left\lVert#1\right\rVert}
\newcommand{\Ed}{\mathbb{E}^d}
\newcommand{\Ee}{\mathbb{E}}
\newcommand{\Ss}{\mathbb{S}^{d-1}}
\newcommand{\Sp}{\mathbb{S}}
\newcommand{\St}{\mathbb{S}^3}
\newcommand{\ud}{\boldsymbol{u}}
\newcommand{\pd}{\boldsymbol{p}}
\newcommand{\qd}{\boldsymbol{q}}
\newcommand{\xd}{\boldsymbol{x}}
\newcommand{\ed}{\boldsymbol{e}}
\newcommand{\vd}{\boldsymbol{v}}
\newcommand{\Od}{\boldsymbol{o}}
\newcommand{\rr}{\mathbb{R}}
\newcommand{\orig}{\mathbf{o}}
\newcommand{\lacket}{\left\{}
\newcommand{\racket}{\right\}}

\DeclareMathOperator{\conv}{conv}

\DeclareMathOperator{\bd}{bd}
\DeclareMathOperator{\inter}{int}

\DeclareMathOperator{\vol}{vol}

\DeclareMathOperator{\Hem}{Hem}

\begin{document}

\setcounter{page}{1}

\maketitle

\begin{abstract}
The illumination number $I(K)$ of a convex body $K$ in Euclidean space $\mathbb{E}^d$ is the smallest number of directions that completely illuminate the boundary of a convex body. A cap body $K_c$ of a ball is the convex hull of a Euclidean ball and a countable set of points outside the ball under the condition that each segment connecting two of these points intersects the ball. The main results of this paper are the sharp estimates $I(K_c)\leq6$ for centrally symmetric cap bodies of a ball in $\mathbb{E}^3$, and $I(K_c)\leq 8$ for unconditionally symmetric cap bodies of a ball in $\mathbb{E}^4$.
\end{abstract}

\textbf{Mathematics Subject Classification:} 52A20, 52A55.\\

\textbf{Keywords:} cap body, illumination number, separation by hemispheres

\section{Introduction}\label{section-introduction}

\subsection{On the status of the Illumination Conjecture}

Let $\Ed$ denote a $d$-dimensional Euclidean space. The origin point is denoted $\orig$, and $\Ss$ is an origin-centered $(d-1)$-sphere with a unit radius. We say that $K \subset \Ed$ is a \textit{convex body} if it is a compact, convex subset of $\Ed$ with a non-empty interior. Unless specified otherwise, it is implied that the convex body contains the origin $\orig$ in its interior.

    Consider a convex body $K$, a point $\pd$ on its boundary, and a point $\ud$ on $\Ss$. We say $\ud$ \textit{illuminates} $\pd$ if and only if there exists a point in the interior of $K$,  $\qd \in \inter K$, such that $\qd = \pd + \lambda \ud$ for some positive $\lambda \in \mathbb{R}^+$. In other words, the ray with direction $\ud$ that starts at $\pd$  has to pierce the interior of $K$. $K$ is \textit{completely illuminated} by a set of directions $U = \left\{\ud_1, \ud_2, \dots, \ud_k \right\} \subset \mathbb{S}^{d-1}$ if every point in $\bd K$ is illuminated by at least one direction from $U$. The illumination number of $K$, $I(K)$, is the smallest number of directions that completely illuminate $K$.

\begin{conjecture}[\textbf{Illumination Conjecture}]\label{illconj}
The illumination number $I(K)$ of any d-dimensional convex body $K \subset \Ed$ does not exceed $2^d$. Moreover, $I(K) = 2^d$ if and only if $K$ is an affine image of a d-cube.
\end{conjecture}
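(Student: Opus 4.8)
The final statement is the Illumination Conjecture, which is open for every $d \ge 3$; what follows is thus an outline of the natural lines of attack and of the obstruction that blocks a complete proof. The plan for the upper bound $I(K) \le 2^d$ starts by passing to the classical equivalence (Boltyanski, Hadwiger, Gohberg--Markus): $I(K) \le n$ iff $K$ is covered by $n$ translates of a positive homothet $\lambda K$ with $0 < \lambda < 1$, equivalently iff $\bd K$ is covered by $n$ translates of $\inter(\lambda K)$. So I would try to produce a covering of $K$ by $2^d$ shrunken copies of itself. For centrally symmetric $K$, equip $\Ed$ with the norm $\norm{\xd}_K$ whose unit ball is $K$; illuminating $\bd K$ then becomes covering the unit sphere of $\norm{\cdot}_K$ by caps $\{\xd : \norm{\xd - \vd_i}_K < \rho\}$, and a maximal $\rho$-separated subset of $K$ together with the Rogers--Shephard bound $\vol(K - K) \le \binom{2d}{d}\vol(K)$ yields such a covering. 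The catch is that this delivers only $I(K) \le \binom{2d}{d}\,\mathrm{poly}(d)$, exponentially weaker than $2^d$; closing this gap is the heart of the matter and is the step I expect to be the main obstacle, since no known argument is sensitive enough to the local geometry of a general $\bd K$ to reach the conjectured value in all dimensions.

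Extra structure can rescue the estimate, and this is where the present paper operates: for zonotopes and zonoids, belt bodies, bodies of constant width, bodies with large symmetry groups, and --- here --- cap bodies in low dimensions, one exploits the structure directly (e.g.\ for a cap body, illuminating the finitely many ``spikes'' and then the spherical part can be done with a small, explicitly chosen family of directions, as will be carried out below). A fully general proof would presumably need a structural dichotomy --- either $K$ is combinatorially close to a polytope with few faces, or $\bd K$ has enough strict convexity that a slightly perturbed covering works --- but no such dichotomy is known to give $2^d$ uniformly in $d$.

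For the equality part, the easy direction is that an affine $d$-cube attains $2^d$: for $[0,1]^d$, a direction $\ud$ illuminates the vertex with coordinate pattern $\varepsilon \in \{0,1\}^d$ exactly when the $i$-th coordinate of $\ud$ is negative iff $\varepsilon_i = 1$, and these $2^d$ open orthants are pairwise disjoint, so one direction per vertex is forced; affine invariance of $I(\cdot)$ extends this to every affine cube. The hard direction --- that nothing else attains $2^d$ --- is obstructed by the existence of convex bodies arbitrarily Hausdorff-close to the cube with $I(K) < 2^d$, so a perturbation argument cannot work; instead one would invoke Boltyanski's theory of belt vectors and belt bodies to show that a non-parallelotope has $I(K) \le 2^d - 1$, and making this work in every dimension is the remaining unresolved point. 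In low dimensions the full statement, equality included, is a theorem ($d = 2$ classical, $d = 3$ centrally symmetric case due to Lassak), and the present paper adds the cap-body cases in $d = 3$ and $d = 4$.
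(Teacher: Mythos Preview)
Your proposal is correct in the most important sense: you recognize that the statement is the Illumination Conjecture, that it is open for all $d\ge 3$, and that no proof can be given. The paper does exactly the same thing. The statement is labeled \texttt{conjecture}, not \texttt{theorem}, and the paper offers no proof of it; it merely surveys its status (the equivalent formulations via coverings and homothets, Levi's $d=2$ result, Lassak's centrally symmetric $d=3$ result, the Rogers--Erd\H{o}s--Shephard bound $I(K)=O(4^d\sqrt{d}\ln d)$, and the recent improvement of Huang et al.) before turning to its actual contributions on cap bodies. So there is nothing to ``compare'' at the level of proof strategy: neither you nor the paper proves this statement, and both say so.

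Your write-up is a reasonable narrative of why the conjecture is hard and which partial results exist, and it aligns well with the paper's introductory survey. One small caution: you frame the discussion as a ``proof proposal'' with a ``plan for the upper bound,'' which could mislead a reader into thinking a proof is forthcoming; it would be cleaner to state at the outset that this item is a conjecture in the paper and that what follows is context, not an argument. Also, the claim that the equality case is fully settled for centrally symmetric bodies in $d=3$ via Lassak is slightly imprecise: Lassak's theorem gives $I(K)\le 8$ for centrally symmetric $K\subset\Ee^3$, which confirms the upper bound part of the conjecture in that class, but the full ``only if'' characterization of equality in $\Ee^3$ is a separate matter not settled by that paper alone.
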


The illumination conjecture has several alternative statements. It was first posed in 1955 as a covering problem in two dimensions.  Levi proved that any 2-dimensional convex body can be covered with 4 translates of its interior \cite{leviUeberdeckungEibereichesDurch1955}. Later, Hadwiger \cite{hadwigerUngelosteProblemeNr1957} conjectured that any d-dimensional convex body can be covered by $2^d$ translates of its interior. Independently, Markus and Gohberg \cite{gohbergCertainProblemCovering1960} have posed a similar conjecture about covering a $d$-dimensional convex body $K$ with translates of its smaller homothetic copies $\ud+\lambda K,$ where $\ud \in \Ed,$ and $ \lambda \in (0,1)$. 

The question of illumination by directions was first stated by Boltyanskii \cite{boltyanskiProblemIlluminatingBoundary1960}. Hadwiger \cite{hadwigerUngelosteProblemeNr1960} has offered a point source interpretation of the problem. Instead of illuminating a region of $\bd K$ by all the rays with the same direction, it is illuminated by all the rays starting in a point outside $K$. 

For a convex body $K \subset \Ed$ two covering numbers can be defined: $C_{hom}(K)$, the smallest number of smaller homothets of $K$ required to cover it, and $C_i(K)$, the smallest number of interior translates required to cover $K$. There are also two illumination numbers: $I_p(K)$, the smallest number of point sources that would completely illuminate $K$, and, finally, the direction illumination number $I(K)$ defined above. For any convex body $K \subset  \Ed$ all these numbers are equal (for details see \cite{boltyanskiExcursionsCombinatorialGeometry1996}). 
\begin{equation}
    C_i(K) = C_{hom}(K) = I_p(K) = I(K)
\end{equation}

So far, the illumination conjecture has been completely proven only in $\mathbb{E}^2$. Best general estimates for the convex body illumination numbers in $\Ee^3, \Ee^4, \Ee^5, \Ee^6$ are, respectively, $I(K)\leq 16$ \cite{papadoperakisEstimateProblemIllumination1999}, $I(K) \leq 96, I(K) \leq 1091,$ and $I(K) \leq 15373$ \cite{prymak_illumination_2020}. In $\Ee^3$ the illumination conjecture is proven for the convex bodies with central symmetry \cite{lassakSolutionHadwigerCovering1984}, bodies with symmetry about a plane \cite{deksterEachConvexBody2000}, and polytopes with affine symmetry \cite{bez_affine}. 

For a long time, the best general estimate was due to Rogers' work \cite{rogersNoteCoverings1957} and his collaborations with Shepard \cite{rogersDifferenceBodyConvex1957} and Erdos \cite{erdosCoveringSpaceConvex1962}:
\begin{equation}
    I(K) \leq \frac{\vol_{d} (K - K)}{\vol_{d}(K)} d(\ln d+\ln\ln d+5)\leq {\binom{2d}{d}}d(\ln d+\ln\ln d+5)=O(4^{d}\sqrt{d}\ln d).
\end{equation}
Which for centrally symmetric convex bodies turns into:
 \begin{equation}
    I(K) \leq \frac{\vol_{d} (K - K)}{\vol_{d}(K)}d(\ln d+\ln\ln d+5)=2^{d}d (\ln d+\ln\ln d+5)= O(2^{d}d\ln d). 
\end{equation}
Recently the general Rogers' estimate was improved in the paper \cite{huangImprovedBoundsHadwiger2018} to $I(K) \leq c_1 4^d e^{-c_2 \sqrt n}$ for some  universal constants $c_1$ and $c_2$ .
 
For a more detailed outlook of the illumination conjecture see \cite{bezdekGeometryHomotheticCovering2018}. 

\begin{definition}
A \textbf{cap body of a ball}  is the convex hull of the closed ball  $B^d[\orig,r] \subset \mathbb{E}^d$ and a countable set of points outside the ball $\left\{\vd_i \in \Ed \setminus B^d[\orig,r] | i \in I \right\}$
such that for any pair of distinct points $\vd_i, \vd_j$ with $ i,j \in I$, the line segment $\overline{\vd_i\vd_j}$ intersects the closed ball.
\end{definition}

 Since the illumination number is invariant with respect to affine transformations, we will only consider the cap bodies based on the origin-centered ball with a unit radius. Here and after, unless specified otherwise, we will use ``cap body'' to refer to the cap body of an origin-centered unit ball. See Fig. \ref{fig:2dcap} for an example of 2-dimensional cap body.

\begin{figure}[h]
\centering
\includegraphics[scale=0.4]{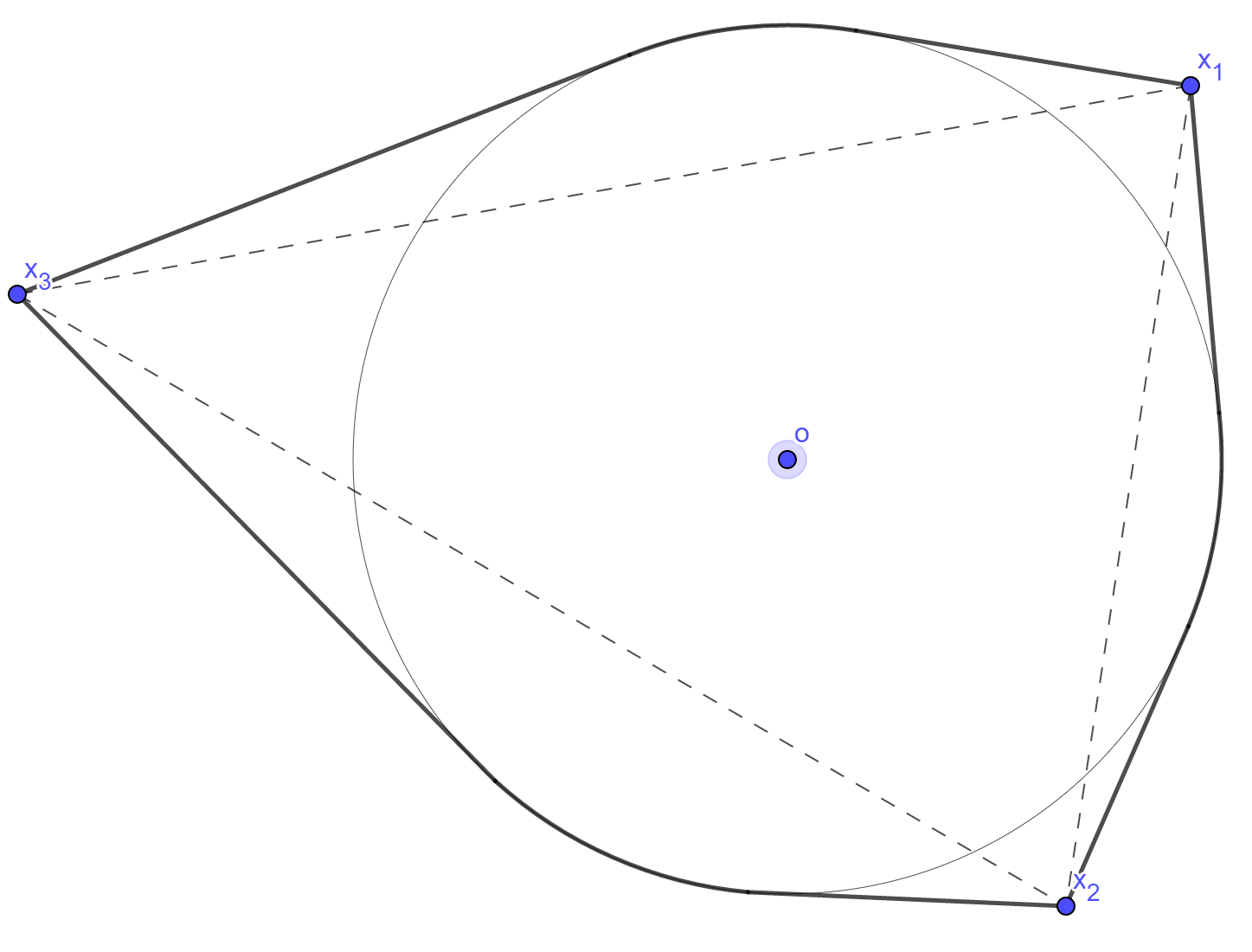}
\caption{A cap body in $\Ee^2$ with three vertices}
\label{fig:2dcap}
\end{figure}

 Cap bodies of a ball were first introduced by Minkowski in 1903 \cite{minkowskiVolumenUndOberflache1903}. Minkowski has conjectured that only the cap bodies maximize the product of volume and mean width given a fixed surface area. This conjecture was proven later \cite{bolBeweisVermutungMinkowski1943}. For the details and recent results on Minkowski's quadratic inequality extremals see \cite{shenfeldExtremalsMinkowskiQuadratic2019}.

Cap bodies were used by Naszodi in \cite{nasspiky}, under the name of a ``spiky ball'' in the illumination context. Naszodi used the construction to demonstrate that for any positive $\varepsilon$ there is a $d$-dimensional cap body in a $\varepsilon$-region of a Euclidean ball with an illumination number exponentially large with respect to $d$.

\subsection{New results}

Let $\left\{ \ed_i \mid i \in \{1 \dots d \right\}\}$ be the standard orthonormal basis. For a point $\ud \in \Ed$ and $\alpha \in \rr$ we will denote by $H_{\ud,\alpha}$ the hyperplane given by the equation $\left\{ \xd \in \Ed \mid \langle \xd, \ud \rangle = \alpha \right\}$. For that hyperplane,  $H^+_{\ud, \alpha} = \lacket \xd \in \Ed \mid \langle \xd, \ud \rangle \geq \alpha \racket$ is its positive halfspace and $H^-_{\ud, \alpha}=\lacket \xd \in \Ed \mid \langle \xd, \ud \rangle \leq \alpha \racket$ is its negative halfspace. The hyperplanes $H_{\ed_i, 0}$ we will call \textit{coordinate hyperplanes}. Respectively, $(d-2)$-greatspheres $G_i = \Ss \cap H_{\ed_i, 0}$ are \textit{coordinate greatspheres}. If $\ud$ is a unit vector, $\Hem_{\ud}$ stands for the open hemisphere of $\Ss$ with centre in $\ud$.

There is a correspondence between vertices $\vd_i$ of a cap body $K_c = \conv \left(  \Ss \cup  \left\{  \vd_i \mid i \in I \right\} \right)$ and spherical caps on $\Ss$, 
$C_i = \Ss \cap H^+_{\vd_i,1} \mid  i \in I$. These caps form a packing on $\Ss$, their interiors do not intersect.
We will prove that the problem of illuminating a cap body by a set of directions is equivalent to the problem of covering spherical caps with open hemispheres. If the spherical cap $C$ is a subset of an open hemisphere $\Hem_{\ud}$ for some $\ud \in \Ss$, we say that $\Hem_{\ud}$  \textit{separates} $C$. Similarly, hyperplane $H$ passing through the origin separates the cap $C$ if one of the open hemispheres $\Ss \setminus H^{\pm}$ separates the cap. In this case we also say that the $(d-2)$-greatsphere $H \cap \Ss$ separates the cap.

\begin{theorem}[\textbf{Cap Body Illumination Criterion}]\label{thm:cbcriterion}
A cap body $K_c = \conv \left( \Ss \cup \left\{\vd_i \mid i \in I\right\} \right)$ is illuminated by directions $\ud_1, \dots, \ud_k \in \Ss$ if and only if each closed spherical cap $\Ss \cap H^+_{\vd_i,1},$ for $i \in I$ is separated by some open hemisphere from $\Hem_{-\ud_j}$, $j \in \{1,\dots,k\}$, and this set of hemispheres completely covers $\Ss$.
\end{theorem}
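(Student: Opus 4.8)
The plan is to stratify the boundary $\bd K_c$ and to determine, for each stratum, precisely which unit directions illuminate its points; the two conditions in the statement then fall out by bookkeeping. Write $C_i=\Ss\cap H^+_{\vd_i,1}$, and recall the elementary fact that the tangent lines from the exterior point $\vd_i$ touch $\Ss$ exactly along $\partial C_i=\Ss\cap H_{\vd_i,1}$. The first step is to show that $\bd K_c$ is the union of the \emph{spherical part} $\Ss\setminus\bigcup_{i\in I}\relint C_i$ and, for each $i$, the \emph{cone} $L_i:=\conv(\{\vd_i\}\cup\partial C_i)=\bigcup_{\qd\in\partial C_i}\overline{\vd_i\qd}$ over the base circle; in symbols
\[
\bd K_c \;=\; \Bigl(\Ss\setminus\bigcup_{i\in I}\relint C_i\Bigr)\;\cup\;\bigcup_{i\in I} L_i .
\]
The inclusion $\supseteq$ follows by checking that any supporting hyperplane of $K_c$ tangent to the unit ball $B^d$ has the form $H_{\ud,1}$; the inclusion $\subseteq$ follows by inspecting the face cut off by a supporting hyperplane $H_{\ud,\al}$ of $K_c$. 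If $H_{\ud,\al}$ is tangent to $B^d$ the face is a point of $\Ss$, a ruling, or --- when two vertices lie on it --- a segment through the tangency point composed of two rulings; if $H_{\ud,\al}$ is not tangent to $B^d$ the face is a single vertex, because every segment joining two vertices meets $B^d$. I would also record that $\partial C_i\subseteq\Ss\setminus\bigcup_k\relint C_k$, which holds because the caps have disjoint interiors.

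Next I would compute the illuminating directions stratum by stratum. For $\pd$ in the spherical part, the \emph{unique} supporting hyperplane of $K_c$ at $\pd$ is $H_{\pd,1}$, since any supporting hyperplane through $\pd\in\bd B^d$ containing $B^d$ on one side must be tangent to $B^d$ at $\pd$; hence $\ud$ illuminates $\pd$ iff $\langle\ud,\pd\rangle<0$, i.e. iff $\pd\in\Hem_{-\ud}$. For $\pd$ in the relative interior of a ruling $\overline{\vd_i\qd}$, the ice-cream cone $\conv(\{\vd_i\}\cup B^d)$, hence $K_c$, has there the single supporting hyperplane $H_{\qd,1}$ (which contains $\vd_i$ because $\langle\vd_i,\qd\rangle=1$), so again $\ud$ illuminates $\pd$ iff $\qd\in\Hem_{-\ud}$. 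For $\pd=\vd_i$, I would show the tangent cone of $K_c$ at $\vd_i$ equals that of the single ice-cream cone $\conv(\{\vd_i\}\cup B^d)$: each segment $\overline{\vd_i\vd_j}$ meets $B^d$, so $\vd_j$ lies on a ray issuing from $\vd_i$ through $B^d$, whence $\vd_j\in\vd_i+\pos(B^d-\vd_i)$ and, by convexity, $K_c\subseteq\vd_i+\pos(B^d-\vd_i)$. Consequently $\ud$ illuminates $\vd_i$ iff the open ray from $\vd_i$ in direction $\ud$ meets $\inter B^d$, i.e. iff the angle between $\ud$ and $-\vd_i$ is strictly less than $\arcsin(1/|\vd_i|)$. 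A short trigonometric computation --- using that $C_i$ is the cap of angular radius $\arccos(1/|\vd_i|)$ about $\vd_i/|\vd_i|$ and that $\arccos(1/|\vd_i|)+\arcsin(1/|\vd_i|)=\pi/2$ --- identifies this angular condition with $C_i\subseteq\Hem_{-\ud}$, i.e. with $\Hem_{-\ud}$ separating $C_i$.

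It remains to assemble these facts. By the stratification, a set $\ud_1,\dots,\ud_k$ illuminates $K_c$ iff (A) $\Ss\setminus\bigcup_i\relint C_i\subseteq\bigcup_j\Hem_{-\ud_j}$ (spherical part), (B) for each $i$ one has $\partial C_i\subseteq\bigcup_j\Hem_{-\ud_j}$ (the rulings of $L_i$), and (C) for each $i$ some $\Hem_{-\ud_j}$ separates $C_i$ (the apex $\vd_i$). Condition (B) is subsumed by (A) because $\partial C_i\subseteq\Ss\setminus\bigcup_k\relint C_k$. Finally, (A) together with (C) is equivalent to (C) together with $\bigcup_j\Hem_{-\ud_j}=\Ss$: if (A) and (C) hold, then every point of an open cap $\relint C_i$ lies in the hemisphere separating $C_i$, so the hemispheres cover all of $\Ss$; conversely, covering $\Ss$ trivially yields (A). This is exactly the stated criterion. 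The step I expect to be the main obstacle is the vertex analysis in the third paragraph: one must use the hypothesis on the segments $\overline{\vd_i\vd_j}$ in an essential way to pin down the tangent cone at $\vd_i$, and then verify that the resulting angular inequality is literally the separation condition $C_i\subseteq\Hem_{-\ud}$; the other somewhat delicate point is making the boundary stratification genuinely exhaustive and justifying the uniqueness of the supporting hyperplane on the spherical part and on ruling interiors.
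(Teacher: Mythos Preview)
Your argument is correct and follows the same overall architecture as the paper---stratify $\bd K_c$ into a spherical part and the conical pieces, characterize illumination on each, then assemble---but the execution on the conical pieces is genuinely different. The paper does not analyze rulings individually; instead it proves in one stroke (via a homothety shrinking the spike toward a base point) that \emph{every} point of the spike $S_i$ is illuminated by $\ud$ if and only if the apex $\vd_i$ is, and separately computes the apex condition $\langle\vd_i,\ud\rangle<-\sqrt{\|\vd_i\|^2-1}$ by an elementary ray/hyperplane intersection. You instead identify the unique supporting hyperplane $H_{\qd,1}$ at each ruling-interior point and obtain the finer pointwise statement that such a point is illuminated iff its base point $\qd$ lies in $\Hem_{-\ud}$; the apex is handled by pinning down the tangent cone of $K_c$ at $\vd_i$ (this is where the cap-body hypothesis on the segments $\overline{\vd_i\vd_j}$ enters) and reading off the angular inequality. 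Your route is slightly more structural and yields a cleaner local picture; the paper's homothety argument is shorter and avoids any discussion of supporting hyperplanes or tangent cones. Your final bookkeeping---reducing (A)$\wedge$(C) to (C) together with $\bigcup_j\Hem_{-\ud_j}=\Ss$---is also made more explicit than in the paper, which simply asserts the conclusion after the lemmas.
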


\begin{definition}
An \textbf{unconditionally symmetric cap body} in $\Ed$ is a cap body which is symmetric about every coordinate hyperplane $H_{\ed_i,0}$ for $i \in \{1,2, \dots, d\}$.
\end{definition}

We study the illumination of centrally symmetric cap bodies in $\mathbb{E}^3$ and unconditionally symmetric cap bodies in $\mathbb{E}^4$. The main results of this paper are the theorems \ref{thm:S2} and \ref{thm:S3}: 

\begin{theorem}\label{thm:S2}
The illumination number of a centrally symmetric cap body of a ball in $\Ee^3$ is at most 6, and this estimate is sharp.
\end{theorem}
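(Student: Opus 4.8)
The plan is to pass, via the Cap Body Illumination Criterion (Theorem~\ref{thm:cbcriterion}), to a covering problem on the sphere. After an affine normalisation we write $K_c=\conv(\mathbb{S}^2\cup\{\pm\vd_i\})$ with $\lvert\vd_i\rvert>1$; putting $c_i=\vd_i/\lvert\vd_i\rvert$ and $\rho_i=\arccos(1/\lvert\vd_i\rvert)\in(0,\tfrac{\pi}{2})$, the cap associated with $\vd_i$ is the closed cap $C_i$ of centre $c_i$ and angular radius $\rho_i$, and the family $\{\pm C_i\}$ is a packing on $\mathbb{S}^2$ invariant under $x\mapsto-x$. A one-line computation shows that $\Hem_{\ud}$ separates $C_i$ precisely when $\langle\ud,c_i\rangle>\sin\rho_i$. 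By Theorem~\ref{thm:cbcriterion} it suffices to find three \emph{linearly independent} directions $\ud_1,\ud_2,\ud_3$ so that each $C_i$ satisfies $\lvert\langle\ud_j,c_i\rangle\rvert>\sin\rho_i$ for some $j$: central symmetry then lets us take the six directions $\pm\ud_1,\pm\ud_2,\pm\ud_3$, and the corresponding hemispheres cover $\mathbb{S}^2$ because $\ud_1^{\perp}\cap\ud_2^{\perp}\cap\ud_3^{\perp}=\{\orig\}$.

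The key elementary fact is this. Call $C_i$ \emph{big} if $\rho_i\ge\arcsin(1/\sqrt3)$. If $\ud_1,\ud_2,\ud_3$ are \emph{orthonormal}, then for every unit vector $c$ one has $\sum_j\langle\ud_j,c\rangle^2=1$, hence $\max_j\lvert\langle\ud_j,c\rangle\rvert\ge 1/\sqrt3$; thus every cap that is not big is automatically separated by one of $\pm\ud_1,\pm\ud_2,\pm\ud_3$. In particular, if there are no big caps, any orthonormal triple works. In general, for two big caps $C_i,C_j$ the packing inequalities $d(c_i,c_j)\ge\rho_i+\rho_j$ and $d(c_i,-c_j)\ge\rho_i+\rho_j$ force $\lvert\langle c_i,c_j\rangle\rvert\le 1/3$, and then the trace estimate $\tfrac{m^2}{3}\le\bigl\lVert\sum_{i=1}^m c_ic_i^{\top}\bigr\rVert_F^2\le m+\tfrac{m(m-1)}{9}$ shows there are at most $m\le 4$ big caps up to antipodality.

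It remains to choose the directions when $m\in\{1,2,3,4\}$. For $m=1$ take $\ud_1=c_1$ and complete it to an orthonormal basis: $\ud_1$ separates $\pm C_1$ (since $\sin\rho_1<1$) and the orthonormality handles all remaining, non-big caps. For $m=2$: since $C_1\cap C_2=\emptyset$ gives $d(c_1,-c_2)\le(\tfrac{\pi}{2}-\rho_1)+(\tfrac{\pi}{2}-\rho_2)$, the open caps of separating directions of $C_1$ and of $-C_2$ meet, so one direction $\ud$ separates $C_1$ and $-C_2$; then $\pm\ud$ separates all of $\pm C_1,\pm C_2$, and completing $\ud$ to an orthonormal basis finishes (in the boundary case $d(c_1,c_2)=\rho_1+\rho_2$ one instead separates $C_1,C_2$ by a common direction, or reduces to the configuration $c_1\perp c_2$, $\rho_1+\rho_2=\tfrac{\pi}{2}$, which is handled by the coordinate directions $\pm\ed_1,\pm\ed_2,\pm\ed_3$). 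The cases $m=3,4$ are where the real work lies: one again lets a single antipodal pair absorb two of the big caps, uses a further pair for the remaining one(s), and is left with one pair to spend on the \emph{small} (non-big) caps; the crucial point is that a small cap is disjoint from every big cap, which confines its centre to a bounded region (for $m=4$ the big caps sit at the vertices of a cube and the small caps are forced near $\pm\ed_1,\pm\ed_2,\pm\ed_3$, while for $m=3$ they are near $\pm\ed_i$ together with a single further tetrahedral direction), and one then checks that the chosen directions separate all of them. I expect this last verification — pinning down exactly where the small caps can sit and showing the non‑orthogonal triple still separates them — to be the main obstacle, the near‑tetrahedral sub‑configurations, where the slack in the displayed estimate is tightest, being the delicate ones.

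Finally, sharpness. For $K_c=\conv\bigl(\mathbb{S}^2\cup\{\pm\sqrt2\,\ed_1,\pm\sqrt2\,\ed_2,\pm\sqrt2\,\ed_3\}\bigr)$ the six caps are the closed caps of angular radius $\tfrac{\pi}{4}$ centred at $\pm\ed_1,\pm\ed_2,\pm\ed_3$, so the sets of directions separating them are the six \emph{open} caps of radius $\tfrac{\pi}{4}$ at $\pm\ed_i$, which are pairwise disjoint. By Theorem~\ref{thm:cbcriterion} any illuminating set must meet each of these six disjoint sets, so $I(K_c)\ge 6$; since $\pm\ed_1,\pm\ed_2,\pm\ed_3$ do illuminate $K_c$, we get $I(K_c)=6$, and the estimate is sharp.
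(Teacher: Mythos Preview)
Your reduction via Theorem~\ref{thm:cbcriterion}, the observation that an orthonormal triple automatically separates every cap of radius below $\arcsin(1/\sqrt3)$, the trace bound $m\le4$ on the number of ``big'' caps, and the sharpness argument are all correct; the cases $m\le 2$ are also fine. But the proof is genuinely incomplete at $m\in\{3,4\}$, as you yourself acknowledge, and the sketch you give does not close the gap. Once you let a single pair $\pm\ud$ absorb two big caps you lose orthonormality of $(\ud_1,\ud_2,\ud_3)$, and with it your only tool for controlling the small caps. Your fallback is a structural claim --- that for $m=4$ the big-cap centres sit at the vertices of a cube and the small-cap centres are forced near $\pm\ed_i$, with an analogous picture for $m=3$ --- but $\lvert\langle c_i,c_j\rangle\rvert\le1/3$ alone does not pin the $c_i$ to any rigid configuration, and disjointness from a handful of caps of radius barely above $\arcsin(1/\sqrt3)$ is a very weak constraint on a small cap's centre. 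Without that structure there is no verification that your (non-orthogonal) triple separates every small cap, so the argument stalls exactly where you predicted.

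The paper avoids this case split by a different normalisation: it places a cap of \emph{maximal} radius $r_{\max}$ at $\ed_3$ and tries the three coordinate great circles $G_1,G_2,G_3$. Any cap not separated by all three intersects both $G_1$ and $G_2$ and hence is seen from $\ed_3$ under an angle $\ge\pi/2$; a short spherical-trigonometry estimate, using $r_p\le r_{\max}$ and $d_p\ge r_{\max}+r_p$, shows this forces $r_{\max}=r_p=\pi/4$ with the offending centre on $G_3$, a single rigid extremal configuration handled by rotating $G_1,G_2$. The inequality $r_p\le r_{\max}$, available precisely because one chose the \emph{largest} cap as the pole, is what makes the estimate go through; your threshold $\arcsin(1/\sqrt3)$ carries no such comparison among the big caps, which is why $m\ge3$ becomes a genuinely multi-parameter problem in your approach.
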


\begin{theorem}\label{thm:S3}
The illumination number of an unconditionally symmetric cap body of a ball in $\Ee^4$ is at most 8, and this estimate is sharp.
\end{theorem}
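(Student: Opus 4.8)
The plan is to use the Cap Body Illumination Criterion (Theorem \ref{thm:cbcriterion}) to reduce the problem to a covering/separation statement on $\mathbb{S}^3$: it suffices to exhibit a fixed set of $8$ open hemispheres $\Hem_{\ud_1},\dots,\Hem_{\ud_8}$ that (a) cover all of $\mathbb{S}^3$ and (b) for every unconditionally symmetric cap body, each of its associated caps $C_i=\mathbb{S}^3\cap H^+_{\vd_i,1}$ is contained in at least one of them. The natural candidates are the $8$ directions $\pm\ed_1,\pm\ed_2,\pm\ed_3,\pm\ed_4$; the open hemispheres $\Hem_{\pm\ed_k}$ are exactly the two open sides of the coordinate hyperplane $H_{\ed_k,0}$, so covering $\mathbb{S}^3$ is immediate (any point of $\mathbb{S}^3$ has a nonzero coordinate). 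The real content is step (b): showing that each cap of an unconditionally symmetric cap body avoids some coordinate greatsphere $G_k$. Equivalently, I must show that a spherical cap that is small enough to belong to a packing compatible with the full unconditional symmetry group cannot simultaneously meet all four coordinate greatspheres $G_1,G_2,G_3,G_4$.

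First I would set up the packing constraint precisely. A cap $C_i$ has some angular radius $\rho_i$ about its centre $\cd_i\in\mathbb{S}^3$; the condition that the segment $\overline{\vd_i\vd_j}$ meets the ball for $i\neq j$ translates (as in the proof of Theorem \ref{thm:cbcriterion}, which I may assume) into the caps having pairwise disjoint interiors, i.e. the spherical distance between distinct centres is at least $\rho_i+\rho_j$. Because the body is unconditionally symmetric, the set of caps is invariant under all sign changes of coordinates, so each cap comes in an orbit of size up to $16$, and all caps in an orbit share the same radius $\rho$. The key inequality I would extract is a bound on $\rho$ for a cap whose centre $\cd$ is not equidistant-trivial: if $\cd$ has a coordinate equal in absolute value to another, reflecting in the bisecting hyperplane gives a second cap of the same radius whose centre is at a controlled distance, forcing $\rho$ to be small; and if $\cd$ is in general position, the $16$ sign-reflections of $\cd$ are genuinely distinct and the closest pair among them (obtained by flipping the sign of the coordinate of smallest absolute value) pins down $\rho$ in terms of $\min_k|\cd_k|$.

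Next I would show that a cap meeting all four coordinate greatspheres must have angular radius bounded below. If $C$ meets $G_k$ then the plane $H_{\ed_k,0}$ comes within spherical distance $\rho$ of $\cd$, i.e. $|\langle\cd,\ed_k\rangle|=|\cd_k|\le\sin\rho$. Meeting all four forces $|\cd_k|\le\sin\rho$ for every $k$, and since $\sum_k\cd_k^2=1$ this gives $1\le 4\sin^2\rho$, i.e. $\rho\ge\pi/6$. So the entire argument reduces to a clean arithmetic contradiction: I must verify that the packing constraint above is incompatible with $\rho\ge\pi/6$ for a centre $\cd$ with all $|\cd_k|\le 1/2$. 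For such a centre, some two coordinates are $\le 1/2$ in absolute value while the vector has unit norm, and flipping the sign of the smaller-modulus coordinate $\cd_m$ produces a distinct cap centre at spherical distance $2\arcsin|\cd_m|$; disjointness demands $2\arcsin|\cd_m|\ge 2\rho\ge\pi/3$, so $|\cd_m|\ge 1/2$. Combined with $|\cd_m|\le 1/2$ we get $|\cd_m|=1/2$ exactly, and then a short case analysis on the remaining coordinates (all $\le1/2$, summing in squares to $3/4$) together with reflections in the diagonal hyperplanes $\langle\xd,\ed_k-\ed_\ell\rangle=0$ eliminates the boundary case. Hence no cap of an unconditionally symmetric cap body meets all four coordinate greatspheres, so every cap is separated by some $\Hem_{\pm\ed_k}$, and the $8$ directions $\pm\ed_k$ illuminate $K_c$.

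Finally, sharpness: I would exhibit an unconditionally symmetric cap body in $\mathbb{E}^4$ with $I(K_c)=8$, the natural candidate being the convex hull of $\mathbb{S}^3$ with the $16$ vertices $(\pm t,\pm t,\pm t,\pm t)$ for $t$ slightly larger than $1/2$ (so that the $16$ caps of angular radius just under $\pi/6$ still form a packing), mirroring the cross-polytope-like extremal configuration; a direction-counting argument via Theorem \ref{thm:cbcriterion} shows $7$ hemispheres cannot separate all $16$ caps while also covering $\mathbb{S}^3$. The main obstacle I anticipate is the boundary case $\rho=\pi/6$ in step three — the inequalities are all tight there, so ruling it out rigorously requires careful use of the diagonal symmetries rather than just the sign symmetries, and getting the packing-to-radius translation exactly right (closed versus open caps, and the precise distance of the nearest reflected centre) is where the argument is most delicate.
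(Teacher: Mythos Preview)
Your reduction via Theorem~\ref{thm:cbcriterion} and the observation that a cap meeting all four coordinate greatspheres must have $|\cd_k|\le\sin\rho$ for every $k$, hence $\rho\ge\pi/6$, are both correct. The gap is that your proposed set of directions $\pm\ed_1,\dots,\pm\ed_4$ does \emph{not} work for every unconditionally symmetric cap body, and the ``boundary case'' you flag is not a removable technicality but a genuine counterexample. Take the sixteen caps with centres $\bigl(\pm\tfrac12,\pm\tfrac12,\pm\tfrac12,\pm\tfrac12\bigr)$ and radius exactly $\pi/6$: adjacent centres are at spherical distance $\pi/3=2\rho$, so the closed caps form a legitimate packing (interiors disjoint), the configuration is unconditionally symmetric, and every cap is \emph{tangent} to all four $G_k$. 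A closed cap tangent to $G_k$ contains a point of $G_k$ and is therefore contained in neither open hemisphere $\Hem_{\pm\ed_k}$, so none of these caps is separated by your eight hemispheres. The same phenomenon occurs for the $2$-tangent caps with centres $\bigl(\pm\tfrac1{\sqrt2},\pm\tfrac1{\sqrt2},0,0\bigr)$ and radius $\pi/4$, and for the $3$-tangent caps with centres $\bigl(\pm\tfrac1{\sqrt3},\pm\tfrac1{\sqrt3},\pm\tfrac1{\sqrt3},0\bigr)$.

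Your attempt to kill the boundary case using ``reflections in the diagonal hyperplanes $\langle\xd,\ed_k-\ed_\ell\rangle=0$'' is not legitimate: unconditional symmetry means invariance under sign changes of coordinates only, not under coordinate permutations, so those diagonal reflections need not be symmetries of the body and cannot be used to manufacture extra packing constraints. What the paper does instead is exactly what your argument shows is forced: once you know that any cap not separated by the $G_k$ must be $k$-tangent for some $k\in\{2,3,4\}$ (your computation $\cd_k\in\{0,\pm\sin\rho\}$ is precisely this), it classifies the finitely many $k$-tangent configurations that can coexist in a packing, and for \emph{each} such configuration chooses a \emph{different} system of four pairwise orthogonal greatspheres (e.g.\ rotating $G_1,G_2$ by $\pi/4$ in the $\ed_1\ed_2$-plane) that separates both the $k$-tangent caps and every other cap that can be packed alongside them. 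The eight illuminating directions therefore depend on the cap body; a single universal choice does not exist.
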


\begin{rmk}
The illumination conjecture has already been proven for centrally symmetric convex bodies in $\Ee^3$ \cite{lassakSolutionHadwigerCovering1984}. We sharpen the illumination number estimate for the centrally symmetric cap bodies to 6, compared to Lassak's general estimate of 8. Similarly, in $\Ee^4$ we show that the illumination number of an unconditionally symmetric cap body is at most 8, compared to $2^4$ that features in the general illumination conjecture statement.
\end{rmk}

In section \ref{sec:s2proof}  we show that for the centrally symmetric cap bodies in $\mathbb{E}^3$, there always exist three pairwise orthogonal great circles that separate every cap on the sphere. We pick a cap $C_{max}$ of a largest radius and position the cap body so that $\ed_3$ is the centre of this cap. Caps that are not separated by the coordinate great circle $G_3$ are the ones that intersect it. We then show that the great circles $G_1, G_2$ can be rotated around $\ed_3$ so that all these caps are separated.

For the cap bodies in $\Sp^3$ we consider only the cap bodies with caps that are not separated by the 4 coordinate greatspheres. We show in section \ref{sec:ktan} that due to the unconditional symmetry,  the caps that fail to be separated by this configuration have to be tangent to $k$ coordinate greatspheres ($1 < k \leq d$), and have their centers on the remaining $d-k$ coordinate greatspheres. We call such caps $k$-tangent.

Since the caps are packed on the sphere, only four distinct configurations of $k$-tangent caps are allowed (up to orthogonal transformation
). We consider all the possible configurations of $k$-tangent caps, for each configuration we pick four pairwise orthogonal 2-greatspheres that separate all the $k$-tangent caps, and then we show that these greatspheres would also separate any other cap that forms a packing with the $k$-tangent caps.

\begin{rmk}
The illumination number of the convex body is invariant with respect to affine transformations. Therefore, our results are also applicable to the cap bodies of ellipsoids, affine images of the cap bodies of the balls.
\end{rmk}

\section{Proof of Theorem \ref{thm:cbcriterion}}

 Let $K_c = \conv \left( \Ss \cup \left\{\vd_i \mid i \in I \right\} \right)$ denote a cap body in $\Ed$.

\begin{lemma}
\label{illum_innp} A vertex $\vd_i$ of a cap body $K_c$ is illuminated by the direction $\ud \in \Ss$ if and only if $\langle \vd_i, \ud \rangle < -\sqrt{\norm{\vd_i}^2-1}$

\end{lemma}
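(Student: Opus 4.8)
The statement to prove is Lemma \ref{illum_innp}: a vertex $\vd_i$ of the cap body $K_c$ is illuminated by $\ud \in \Ss$ if and only if $\langle \vd_i, \ud \rangle < -\sqrt{\norm{\vd_i}^2 - 1}$. My plan is to unwind the definition of illumination directly and translate it into a statement about the tangent cone to $K_c$ at the vertex $\vd_i$. The direction $\ud$ illuminates $\pd = \vd_i$ precisely when the ray $\vd_i + \lambda\ud$, $\lambda > 0$, enters $\inter K_c$; equivalently, $\ud$ must point into the interior of the tangent cone of $K_c$ at $\vd_i$. Since $K_c$ contains the unit ball $B^d[\orig,1]$ and $\vd_i$ lies outside it, the relevant part of the tangent cone at $\vd_i$ is the cone generated by the ball as seen from $\vd_i$; any other vertices $\vd_j$ only enlarge the tangent cone, so they can only help illumination, and it will suffice to show that the ray enters $\inter(\conv(\vd_i \cup B^d[\orig,1]))$.

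\textbf{Key steps.}

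First I would observe that, because each $\vd_j$ for $j \neq i$ together with $B^d[\orig,1]$ satisfies the chord condition, the point $\vd_i$ ``sees'' the ball in a circular cone, and locally near $\vd_i$ the body $K_c$ coincides (for the purpose of the tangent cone) with $\conv(\{\vd_i\} \cup B^d[\orig,1])$ — the other vertices contribute directions already interior to this cone or irrelevant ones. So $\ud$ illuminates $\vd_i$ iff $\vd_i + \lambda \ud \in \inter\big(\conv(\{\vd_i\}\cup B^d[\orig,1])\big)$ for small $\lambda > 0$, which holds iff the ray from $\vd_i$ in direction $\ud$ meets $\inter B^d[\orig,1]$, i.e. iff the line through $\vd_i$ with direction $\ud$ passes through the open ball on the $\ud$-forward side. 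Second, I would compute this condition analytically: the squared distance from $\orig$ to the line $\{\vd_i + t\ud : t \in \rr\}$ is minimized at $t_0 = -\langle \vd_i, \ud\rangle$, with minimal squared distance $\norm{\vd_i}^2 - \langle\vd_i,\ud\rangle^2$. The ray ($t > 0$) meets the open unit ball iff $t_0 > 0$ and $\norm{\vd_i}^2 - \langle\vd_i,\ud\rangle^2 < 1$. The first inequality gives $\langle\vd_i,\ud\rangle < 0$; combined with $\langle\vd_i,\ud\rangle^2 > \norm{\vd_i}^2 - 1$ this is exactly $\langle\vd_i,\ud\rangle < -\sqrt{\norm{\vd_i}^2-1}$ (the quantity under the root being nonnegative since $\vd_i \notin \inter B^d[\orig,1]$). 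Conversely, if $\langle\vd_i,\ud\rangle < -\sqrt{\norm{\vd_i}^2-1}$, then $t_0 > 0$ and the minimal distance is less than $1$, so the forward ray enters $\inter B^d[\orig,1] \subset \inter K_c$, giving illumination.

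\textbf{Main obstacle.}

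The only genuinely non-routine point is the first step: justifying that the other vertices $\vd_j$ do not interfere, i.e. that illumination of $\vd_i$ is governed entirely by the cone spanned by the ball from $\vd_i$. The cleanest way is to argue that $\conv(\{\vd_i\}\cup B^d[\orig,1]) \subseteq K_c$, so its interior is contained in $\inter K_c$, which immediately gives the ``if'' direction; for ``only if,'' one notes that $\vd_i$ is a vertex (extreme point) of $K_c$, and near $\vd_i$ the boundary of $K_c$ consists of the conical portion coming from the ball plus possibly facets through $\vd_i$ and other vertices — but any direction entering $\inter K_c$ from $\vd_i$ must in particular enter the tangent cone, and I would verify that the tangent cone of $K_c$ at $\vd_i$ equals the tangent cone of $\conv(\{\vd_i\}\cup B^d[\orig,1])$ at $\vd_i$ by using the chord condition to rule out the tangent cone being larger in a way that would change the sign condition. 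Once that reduction is in place, the rest is the elementary line-ball intersection computation above.
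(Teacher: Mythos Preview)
Your proposal is correct and follows essentially the same approach as the paper: both reduce illumination of $\vd_i$ to the condition that the forward ray $\vd_i + \lambda\ud$ enters the open unit ball, and then verify this analytically. The paper's computation intersects the ray with the polar hyperplane $H_{\vd_i,1}$ and checks that the intersection point has norm less than $1$, whereas you minimize $\norm{\vd_i + t\ud}$ directly; your explicit tangent-cone justification for the ``only if'' direction (using the chord condition to show that the other vertices $\vd_j$ do not enlarge the tangent cone of $K_c$ at $\vd_i$) is a point the paper leaves implicit.
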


\begin{proof}
Suppose direction $\ud$ illuminates the vertex $\vd_i$. This takes place if and only if the ray starting at the point $\vd_i$ with the direction $\ud$ intersects the plane $H_{\vd_i,1}$ in a point inside the $\Ss$. In other words, there is a positive $\lambda$ such that 
$\langle \vd_i, \vd_i+\lambda \ud \rangle = 1 \left( \mbox{which is equivalent to }\lambda = \frac{1-\norm{\vd_i}^2}{\langle \ud, \vd_i \rangle}\right)$ and $\norm{\vd_i + \lambda \ud} < 1$. Combining these two conditions concludes the proof of the lemma.
\end{proof}

    \begin{lemma}
    A boundary point of a cap body $\pd \in \bd K_c$ that is also on the sphere, $\pd \in \Ss$, is illuminated by the direction $\ud \in \Ss$ if and only if $\langle \pd,\ud \rangle <0$
    \end{lemma}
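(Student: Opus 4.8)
The plan is to identify the tangent hyperplane $H_{\pd,1}$ of $\Ss$ at $\pd$ as a supporting hyperplane of the \emph{whole} cap body, and then read off both directions of the equivalence from this. Unlike in Lemma~\ref{illum_innp}, here $\pd$ is not a vertex, so I cannot directly say ``$\ud$ illuminates $\pd$ iff the ray hits $H_{\pd,1}$ inside $\Ss$''; instead I will bound $\inter K_c$ between the open ball and the open halfspace $\{\langle\xd,\pd\rangle<1\}$. Concretely, the first step: since $B^d[\orig,1]\subseteq K_c$ and $\pd\in\bd K_c\cap\bd B^d[\orig,1]$, any supporting halfspace of $K_c$ at $\pd$ also contains $B^d[\orig,1]$ and hence is a supporting halfspace of the ball at its boundary point $\pd$; the only such halfspace is $H^-_{\pd,1}$. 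Therefore $K_c\subseteq H^-_{\pd,1}$, and in particular $\inter K_c\subseteq\{\xd\in\Ed:\langle\xd,\pd\rangle<1\}$.

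With this in hand, both implications are one-line computations. For the ``if'' direction, suppose $\langle\pd,\ud\rangle<0$; then $\norm{\pd+\lambda\ud}^2=1+2\lambda\langle\pd,\ud\rangle+\lambda^2<1$ for all sufficiently small $\lambda>0$, so $\pd+\lambda\ud\in\inter B^d[\orig,1]\subseteq\inter K_c$, and hence $\ud$ illuminates $\pd$. For the ``only if'' direction, suppose $\langle\pd,\ud\rangle\geq0$; then for every $\lambda>0$ we have $\langle\pd+\lambda\ud,\pd\rangle=1+\lambda\langle\ud,\pd\rangle\geq1$, so by the first step $\pd+\lambda\ud\notin\inter K_c$. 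Thus no point of the ray from $\pd$ in direction $\ud$ lies in $\inter K_c$, i.e.\ $\ud$ does not illuminate $\pd$. Combining the two cases proves the lemma.

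The only step that needs a moment's thought is the first one — that $H_{\pd,1}$ supports the entire cap body and not merely the inscribed sphere — and even that is immediate, being a special case of the fact that a convex body containing a ball can touch the boundary sphere of that ball only along the sphere's tangent hyperplanes. After that, the argument is essentially the $\norm{\pd}=1$ specialization of Lemma~\ref{illum_innp}, with the strict inequality $\langle\vd_i,\ud\rangle<-\sqrt{\norm{\vd_i}^2-1}$ degenerating to $\langle\pd,\ud\rangle<0$.
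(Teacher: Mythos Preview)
Your proof is correct and, for the ``if'' direction, runs the same computation as the paper (expand $\norm{\pd+\lambda\ud}^2$ and use $\norm{\pd}=\norm{\ud}=1$). The difference lies in the ``only if'' direction: the paper simply asserts that $\pd$ is illuminated iff the ray from $\pd$ enters the \emph{open unit ball}, tacitly identifying $\inter K_c$ with the open ball near $\pd$, and then reduces everything to $\norm{\pd+\lambda\ud}<1$. You instead justify that identification explicitly, arguing that any supporting hyperplane of $K_c$ at $\pd$ must also support the inscribed ball at $\pd$, hence must be $H_{\pd,1}$, so $\inter K_c\subseteq\{\langle\xd,\pd\rangle<1\}$; then $\langle\pd,\ud\rangle\geq0$ forces $\langle\pd+\lambda\ud,\pd\rangle\geq1$ for all $\lambda>0$, ruling out illumination. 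Your route is a little longer but closes a step the paper leaves implicit; the paper's route is terser but relies on the reader supplying exactly the supporting-hyperplane fact you spelled out.
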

    
    \begin{proof}

   The point $\pd$ is illuminated by $\ud$ if and only if there is a non-negative  $\lambda$ such that $\norm{\pd + \lambda \ud} < 1$, or, equivalently, $\norm{\pd + \lambda \ud}^2 < 1$. Using the fact that $\norm{\pd} = \norm{\ud} = 1$ yields $2 \lambda \langle \pd,\ud \rangle <-1$. It holds for some  $\lambda \geq 0$ if and only if  $\langle \pd,\ud \rangle<0$.

    \end{proof}
     \begin{definition}
   A \textbf{spike} of a vertex $\vd_i, i \in I$ of a cap body $K_c$ is the set $S_i = \bd \conv ( \Ss \cup \vd_i) \setminus \Ss$.
   \end{definition}
   
    Note that any point on the boundary of a cap body either lies on the $\Ss$, or on a spike $S_i$ of some vertex $\vd_i$.
    
    \begin{lemma}
    Let $\vd_i \in \Ed \setminus \Ss$ be a vertex of a cap body $K_c$. Then every point on the spike $S_i$ is illuminated by the direction $\ud \in \Ss$ if and only if the vertex $\vd_i$ is illuminated by the direction $\ud$  
    \end{lemma}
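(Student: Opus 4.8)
The plan is to reduce illumination of the entire spike $S_i$ to illumination of its single vertex $\vd_i$, using convexity of the spike together with the fact that the only ``hard'' points to illuminate are the boundary points of $K_c$, and that the spike $S_i$ is a ruled surface whose rulings are segments joining $\vd_i$ to the tangency circle $\Ss \cap H_{\vd_i,1}$. First I would observe that the ``if'' direction is the easy half: if $\vd_i$ is illuminated by $\ud$, then, since $K_c$ is convex and $\vd_i$ is an extreme point with the ray $\vd_i + \lambda\ud$ entering $\inter K_c$, a standard convexity argument shows every point of the form $\pd = t\vd_i + (1-t)\qd$ with $\qd \in \Ss \cap H_{\vd_i,1}$, $t \in [0,1]$, is also illuminated by $\ud$ — because the ray from $\pd$ in direction $\ud$ and the ray from $\vd_i$ in direction $\ud$ bound a region inside $K_c$ near $\pd$, so $\pd + \varepsilon \ud \in \inter K_c$ for small $\varepsilon$. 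Concretely one writes $\pd + \lambda\ud$ as a convex combination of $\vd_i + \lambda\ud \in \inter K_c$ and $\qd + \lambda \ud$, and checks the latter is at least in $K_c$ (or uses that $\qd \in \Ss$ and $\langle \qd, \ud\rangle \le 0$ at the relevant points), giving an interior point. Since every point of $S_i$ has this form, the whole spike is illuminated.

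For the ``only if'' direction the argument is immediate: $\vd_i \in S_i$, so if every point of $S_i$ is illuminated by $\ud$, then in particular $\vd_i$ is. Thus the substantive content is entirely in the forward implication, and the proof amounts to making the convex-combination bookkeeping precise. I would set it up by applying Lemma \ref{illum_innp}: $\vd_i$ illuminated means $\langle \vd_i, \ud\rangle < -\sqrt{\norm{\vd_i}^2 - 1}$; and I would use the description of a general spike point $\pd = (1-t)\vd_i + t\qd$ with $\qd$ on the tangency sphere $\Ss \cap H_{\vd_i, 1}$ (so $\langle \qd, \vd_i \rangle = 1$, $\norm{\qd} = 1$). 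Then I need to exhibit $\lambda > 0$ with $\pd + \lambda \ud \in \inter K_c$; the natural candidate is to show $\norm{(1-s)(\vd_i + \mu\ud) + s\,\qd} < 1$ for suitable $s$ close to $t$ and $\mu$ the parameter from the vertex case, or more cleanly to note that $\conv(\{\vd_i + \mu\ud\} \cup (\Ss \cap H_{\vd_i,1}))$ lies in $\inter K_c$ when $\vd_i + \mu\ud \in \inter B^d[\orig, 1]$, which holds precisely when $\vd_i$ is illuminated.

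The main obstacle, and the only place requiring care, is handling the endpoint $t = 1$ (the base circle $\qd \in \Ss \cap H_{\vd_i,1}$, which lies on $\Ss$ itself): such a point is on $\Ss$ and satisfies $\langle \qd, \ud\rangle$ possibly equal to zero or positive, so it is \emph{not} necessarily illuminated, yet it is a limit of spike points. However, $\Ss \cap H_{\vd_i,1}$ is the relative boundary of the spike and lies on $\Ss$, not in the relative interior of $S_i = \bd\conv(\Ss \cup \vd_i)\setminus \Ss$; so by the very definition of $S_i$ these points are excluded from $S_i$, and for $t$ strictly less than $1$ the interior-point argument goes through with room to spare. I would therefore state carefully that $S_i$ consists exactly of the points $(1-t)\vd_i + t\qd$ with $t \in [0,1)$, $\qd \in \Ss \cap H_{\vd_i,1}$, together with $\vd_i$ itself, and run the convexity estimate on that set. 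The rest is a short continuity/convexity computation, so I would not belabor it.
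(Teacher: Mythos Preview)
Your approach is correct and is essentially the same as the paper's: writing a spike point as $\pd=(1-t)\vd_i+t\qd$ with $\qd\in\Ss\cap H_{\vd_i,1}$ and then observing that $\pd+(1-t)\mu\ud=(1-t)(\vd_i+\mu\ud)+t\qd\in\inter K_c$ is exactly the paper's ``shrink the spike by a homothety centred at $\qd$'' argument, just rephrased in convex-combination language. One small clean-up: the claim that $\conv(\{\vd_i+\mu\ud\}\cup(\Ss\cap H_{\vd_i,1}))\subset\inter K_c$ is false as stated (the tangency circle sits on $\bd K_c$), but you immediately fix this by restricting to $t<1$, which is all that is needed since $S_i$ excludes $\Ss$ by definition.
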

    
    \begin{proof}
    
    The ``only if'' part follows from the fact that $\vd_i \in S_i$. 
    
    Suppose that the direction $\ud \in \Ss$ illuminates the vertex $\vd_i$. Now we need to show that an arbitrary point $\pd \in S_i$ is also illuminated by $\ud$. Let $\qd$ be the point where the line through $\vd_i$ and $\pd$ meets $\Ss$. Then ``shrink'' the spike: let $S'_i$ be the homothet of $S_i$ with centre $\qd$ and the homothety coefficient $\frac{\norm{\pd - \qd}}{\norm{\vd_i - \qd}}$, so that $\pd$ is the the image of $\vd_i$ under this transformation. Since for some $\varepsilon\in \mathbb{R}^+$ there is a point $\vd_i + \varepsilon \ud \in \inter \conv S_i$, there is also a point $\pd + \frac{\norm{\pd - \qd}}{\norm{\vd_i - \qd}} \varepsilon \ud \in \inter \conv S'_i \subset \inter \conv S_i$, and hence, $\pd$ is illuminated by $\ud$.
    
    \end{proof}

    \begin{lemma}
    \label{VertCap}
    Let $\vd_i \in \Ed \setminus \Ss$ be a vertex of a cap body $K_c$. Then $\vd_i$ is illuminated by the direction $\ud \in \Ss$ if and only if the closed spherical cap $C_i = \left\{ \pd \in \Ss \mid \langle \pd,\vd_i \rangle \geq 1 \ \right\}$ is separated by $Hem_{-\ud}$.
    \end{lemma}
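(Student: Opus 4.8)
The plan is to reduce both sides of the stated equivalence to a single inequality on the scalar $a := \langle \ud, \vd_i\rangle/\norm{\vd_i}$, the cosine of the angle between $\ud$ and the axis $\vd_i$ of the cap. Set $r := \norm{\vd_i} > 1$, so that $C_i = \{\pd \in \Ss : \langle \pd, \vd_i\rangle \geq 1\}$ is the spherical cap with centre $\vd_i/r$. By Lemma~\ref{illum_innp}, $\vd_i$ is illuminated by $\ud$ exactly when $\langle \vd_i, \ud\rangle < -\sqrt{r^2-1}$, which after dividing by $r$ reads $a < -\sqrt{1-1/r^2}$. It remains to show that $\Hem_{-\ud}$ separates $C_i$ under exactly this condition.

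Since $\Hem_{-\ud} = \{\pd \in \Ss : \langle \pd, \ud\rangle < 0\}$ and $C_i$ is compact, $\Hem_{-\ud}$ separates $C_i$ if and only if $M := \max_{\pd \in C_i}\langle \pd, \ud\rangle < 0$. To evaluate $M$, I would split every vector into its component along $\vd_i$ and its component orthogonal to $\vd_i$: writing $\pd = t\,\vd_i/r + \pd^{\perp}$ with $\pd^{\perp}\perp\vd_i$, a point $\pd\in\Ss$ lies in $C_i$ iff $t\in[1/r,1]$ and $\norm{\pd^\perp}=\sqrt{1-t^2}$. With the analogous splitting $\ud = a\,\vd_i/r + \ud^{\perp}$, the Cauchy--Schwarz inequality gives $\langle\pd,\ud\rangle = at + \langle\pd^\perp,\ud^\perp\rangle \le at + \sqrt{1-a^2}\sqrt{1-t^2}$, with equality attained by a genuine point of $C_i$ (align $\pd^\perp$ with $\ud^\perp$, the case $a=\pm1$ being handled directly). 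Hence $M = \max_{t\in[1/r,1]} g(t)$ with $g(t) = at + \sqrt{1-a^2}\sqrt{1-t^2}$.

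An elementary one-variable analysis ($g'(t) = a - \sqrt{1-a^2}\,t/\sqrt{1-t^2}$, with unique critical point $t=a$, before which $g$ increases and after which it decreases) shows that $M = 1$ when $a \ge 1/r$ and $M = a/r + \sqrt{1-a^2}\sqrt{1-1/r^2}$ when $a < 1/r$. Thus $M < 0$ forces $a < 1/r$, and then, since $a/r + \sqrt{1-a^2}\sqrt{1-1/r^2}$ can be negative only if $a<0$, squaring the inequality $a/r + \sqrt{1-a^2}\sqrt{1-1/r^2}<0$ turns it into $a^2/r^2 > (1-a^2)(1-1/r^2)$, i.e. $a^2 > 1-1/r^2$; together with $a<0$ this is exactly $a < -\sqrt{1-1/r^2}$, and every step reverses. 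Comparing with the illumination side, both conditions coincide with $\langle\vd_i,\ud\rangle < -\sqrt{r^2-1}$, which proves the lemma.

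The only point requiring care is the maximisation step: keeping the two regimes $a \gtrless 1/r$ straight, and checking that the squaring is a genuine equivalence (it is, because it is applied to an inequality between quantities of known sign). The degenerate cases $a = \pm 1$ can be verified by inspection ($\ud=\pm\vd_i/r$), and everything else is bookkeeping.
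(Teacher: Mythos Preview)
Your argument is correct. Both the paper and you reduce the separation side to the illumination inequality $\langle\vd_i,\ud\rangle<-\sqrt{\norm{\vd_i}^2-1}$ and then invoke Lemma~\ref{illum_innp}; the difference is in how that reduction is carried out. The paper argues purely in spherical-geometric terms: the cap $C_i$ has angular radius $r_i$ with $\cos r_i=1/\norm{\vd_i}$, so $C_i\subset\Hem_{-\ud}$ iff the angle between $\ud$ and the cap centre exceeds $\pi/2+r_i$, which in one line becomes $\langle\ud,\vd_i\rangle/\norm{\vd_i}<\cos(\pi/2+r_i)=-\sin r_i=-\sqrt{1-1/\norm{\vd_i}^2}$. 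You instead compute $M=\max_{\pd\in C_i}\langle\pd,\ud\rangle$ explicitly via an axial/orthogonal decomposition and a one-variable optimisation. Your route is longer but entirely self-contained (it \emph{derives} the angular-radius fact rather than assuming it), while the paper's is a two-line appeal to standard spherical geometry. One small cosmetic point: your phrase ``unique critical point $t=a$'' is only literally true for $a\in(0,1)$; for $a\le 0$ the function $g$ is monotone on $[1/r,1]$ with no interior critical point, but your stated conclusion about $M$ in the two regimes $a\gtrless 1/r$ is correct regardless.
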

    
    \begin{proof}
    
    Cap $C_i$ lies in the hemisphere $Hem_{-\ud}$ if and only if the angle between $\ud$ and the centre of $C_i$ (which is equal to the angle between $\ud$ and $\vd_i$) is greater than $\pi/2 + r_i$, where $r_i$ is the spherical radius of the cap $C_i$. This condition is equivalent to  $\frac{\langle \ud,\vd_i \rangle}{\norm{\vd}} \leq \cos \left( \pi/2 + r_i \right)$. This can be transformed into $\langle \vd_i, \ud \rangle < -\sqrt{\norm{\vd_i}^2-1}$ using the fact that $\cos r_i = 1/\norm{\vd_i}$. Together with Lemma \ref{illum_innp} this concludes the proof.

    \end{proof}
    
We have shown that every spike of a cap body is illuminated by a set of directions if and only if every corresponding spherical cap of a cap body is separated by the corresponding set of hemispheres. That concludes the proof of the Theorem \ref{thm:cbcriterion}. 

\section{Proof of the Theorem \ref{thm:S2}}\label{sec:s2proof}

Let $K_c \subset \mathbb{E}^3$ be a cap body symmetric with respect to the origin, with vertices $\left\{\vd_i \mid i \in I\right\}$. We want to prove that there exist six illumination directions $\ud_1, \dots, \ud_6$ such that every closed cap $C_i = \Sp^2 \cap H^+_{\vd_i,1}$ along with every other point on $\mathbb{S}^2$ belongs to at least one open hemisphere $Hem_{-\ud_j}$ . 

\begin{definition}
A \textbf{view angle} of a spherical cap $C \subset \Sp^2$ from the point $\pd \in \Sp^2, \pd \notin C$ is the angle between the two great circles that both pass through $\pd$ and are tangent to $C$
\end{definition}

First, we pick a cap $C_j$ , $j \in I$ with the largest spherical radius. There's at least two such caps, any one will do, we will denote it as $C_{max}$, its centre is $\boldsymbol{c}_{max}$, and its radius is $r_{max}$. Then we rotate the cap body so that ${c}_{max} = \ed_3$. Now consider the coordinate great circles $G_1, G_2, G_3$. If all the caps are separated by these circles, then the 6 directions $\lacket \pm \ed_i \racket$ will illuminate the cap body. 

Suppose there is a cap $C_p$ that is not separated by $G_1, G_2, G_3$ and, hence, intersects all of them. In particular, since $G_1$ and $G_2$ pass through $\ed_3$, the view angle $2\alpha$ of $C_p$ from $\ed_3$ is at least $\pi/2$ (see Fig. \ref{fig:symprobcap}).

Let $\boldsymbol{c}_p$ be the centre of the cap $C_p$, and $r_p$ the spherical radius of the cap. We will show that such a cap exists only if $r_{max} = r_p = \pi/4$, and will demonstrate that such a cap configuration still can be separated by three great circles.  Denote the spherical distance between $\boldsymbol{c}_{max}$ and $\boldsymbol{c}_p$ as $d_p$. We assume $d_p \leq \pi/2$, if this is not the case, we will switch to the antipodal cap of $C_p$.

\begin{figure}[h]
    \centering
    \includegraphics[scale=0.45]{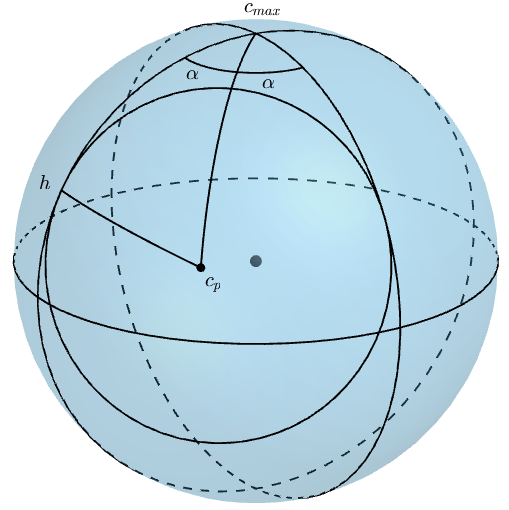}
    \caption{A cap with centre $\boldsymbol{c}_p$ has a view angle $2\alpha \geq \pi/2$ from the point $\boldsymbol{c}_{max}$}
    \label{fig:symprobcap}
\end{figure}

Consider a great circle $F$ that is tangent to $C_p$ at a point $\boldsymbol{h}$ and passes through $\boldsymbol{c}_{max}$. Using the sine theorem for the right spherical triangle $\boldsymbol{c}_{max}\boldsymbol{c}_p\boldsymbol{h}$ and the fact that $d_p \geq r_{max} + r_p$, we get the following inequality:

\begin{equation}
\label{eq:3dtrig}
    \sin \alpha = \frac{\sin r_p}{\sin d_p} \leq \frac{\sin r_p}{\sin (r_{max} + r_p)} = \frac{1}{\sin r_{max} \cot r_p + \cos r_{max}} 
\end{equation}

\begin{itemize}
    \item Case 1: $r_{max} > \pi/4$
    
    The spherical distance between $\boldsymbol{e}_3$ and $-\boldsymbol{e}_3$ is  $\pi \geq 2r_{max} + 2r_p$. Hence $r_p \leq \pi/2 - r_{max}$, which, in this case, leads to $r_p < \pi/4$. Together with (\ref{eq:3dtrig}) we get:

\begin{equation}
    \sin \alpha \leq \frac{1}{\sin r_{max} \cot r_p + \cos r_{max}} < \frac{1}{\sin r_{max}  + \cos r_{max}} < \frac{1}{\sqrt2}
\end{equation}

Which shows that $2\alpha < \pi/2$. Hence the view angle of any other cap from $\boldsymbol{e}_3$ is strictly less than $\pi/2$.
    
    \item Case 2: $r_{max} \leq \pi/4$
    
    Using the inequality (\ref{eq:3dtrig}) and the fact that $r_p \leq r_{max}$ we can obtain the following inequality: 
    
    \begin{equation} \label{eq:s2_pifour}  
        \begin{split}
            &\sin \alpha \leq \frac{1}{\sin r_{max} \cot r_p + \cos r_{max}} \leq \\    
            &\frac{1}{\sin r_{max} \cot r_p + \cos r_{max}} \leq \frac{1}{\sin r_{max} \cot r_{max} + \cos r_{max}} = \\
            &\frac{1}{2 \cos r_{max}} \leq \frac{1}{\sqrt2}
        \end{split}     
    \end{equation}

Equality is only attained if $\cot r_p = \cot r_{max}$ and $\cos r_{max} = \frac{1}{\sqrt2}$ which is equivalent to $r_{max} = r_p = \pi/4$. For the centrally symmetric cap body it is only possible if $\boldsymbol{c}_p$, the centre of $C_p$, is on the great circle $G_3$. To separate all the caps, we pick the great circles $G_1,G_2$ so that $G_2$ passes through $\boldsymbol{c}_p$ and $G_1$ is orthogonal to both $G_2, G_3$.

This configuration separates all the other caps on the sphere. If there is no more caps with a view angle $\pi/2$ from $\boldsymbol{e}_3$, then no other caps can intersect $G_1$ and $G_2$ simultaneously. If there is some other cap $C_q$ with a centre $\boldsymbol{c}_q$ and a spherical radius $r_q$ with a view angle $\pi/2$, then, as shown in (\ref{eq:s2_pifour}), $r_q = \pi/4$ and $\boldsymbol{c}_q \in G_3$. Now on the circle $G_3$ there are four centres (caps $C_p, C_q$ with their antipodes), and no two centres can be closer than $\pi/2$ to each other. That means those four centres are uniformly distributed with distances between adjacent centres being exactly $\pi/2$, as seen in Fig. \ref{fig:symoct}. 

\begin{figure}[h!]
    \centering
    \includegraphics[scale=0.45]{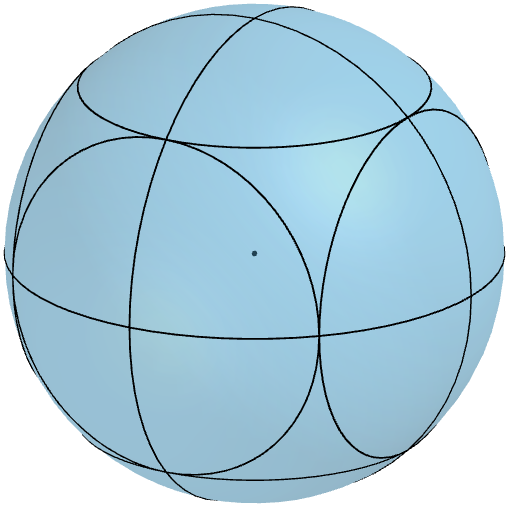}
    \caption{Six caps with radii $\frac{\pi}{4}$}
    \label{fig:symoct}    
\end{figure}

Then the cap $C_p$ and its antipode belong to hemispheres corresponding to circle $G_1$, circle $G_2$ takes care of cap $C_q$ with its antipode, and there are no more caps that intersect all three great circles, as there is no more room on $G_3$. 
\end{itemize}

We have shown that any centrally symmetric packing of spherical caps on $\Sp^2$ can be separated by three greatspheres. That concludes the proof of Theorem \ref{thm:S2}.

\begin{rmk}
This proof is based on solving spherical triangles. In higher dimensions this technique is not as helpful, and our attempts to use the proof for centrally symmetric $\Sp^3$ cap bodies have not been successful so far.
\end{rmk}

\section{Unconditional Cap Bodies and $k$-tangent Caps}\label{sec:ktan}

In this section we will explore possible configurations of the caps on unconditionally symmetrical cap bodies. We do not specify the dimension of a cap body here, everything in this section holds in general $\Ed$ case.

Suppose a coordinate $(d-2)$-greatsphere $G_i$ cuts the cap $C$ off-center, i.e. the centre of $C$ does not lie on $G_i$, but some other interior point $p$ of $C$ does. Since our cap body is symmetric about the hyperplane $H_{e_i,0}$ there is a cap $C'$ $\neq C$ such that $C$ and $C'$ are symmetric about the hyperplane  $H_{\ed_i,0}$. Then $p$ lies both in the $\inter C$ and $\inter C'$, violating the condition that the caps must form a packing (see Fig. \ref{fig:capnopack}).

So if the cap $C$ intersects $G_i$ it  is either  tangent to $G_i$(see Fig. \ref{fig:captan}), or its centre lies on $G_i$ (see Fig. \ref{fig:capcen}).

\begin{figure}[h]
  \begin{subfigure}[b]{0.3\textwidth}
    \includegraphics[width=\textwidth]{{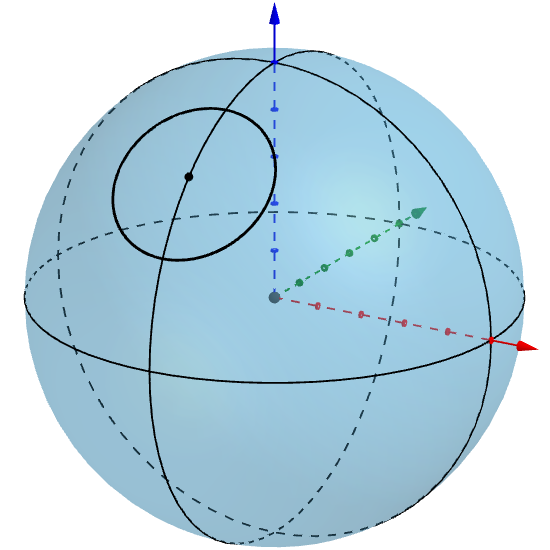}}
    \caption{Centered}
    \label{fig:capcen}
  \end{subfigure}
  \hfill
  \begin{subfigure}[b]{0.3\textwidth}
    \includegraphics[width=\textwidth]{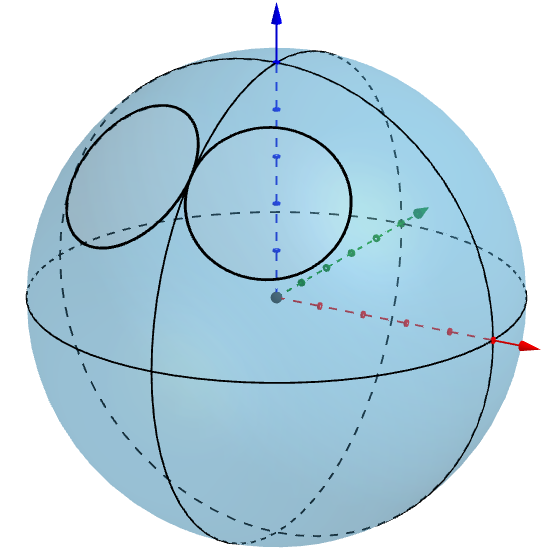}
    \caption{Tangent}
    \label{fig:captan}
  \end{subfigure}
  \hfill
  \begin{subfigure}[b]{0.3\textwidth}
    \includegraphics[width=\textwidth]{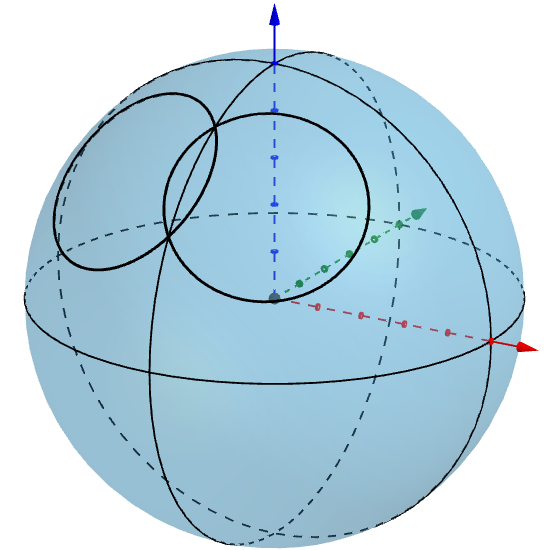}
    \caption{Prohibited, interiors intersect}
    \label{fig:capnopack}
  \end{subfigure}
  \caption{Intersection of a cap and a coordinate greatsphere}
\end{figure}

\indent Again we start with trying to separate all the spherical caps by the coordinate $(d-2)$-greatspheres $G_i$, $i \in \{1, \dots, d\}$. If these greatspheres separate all the caps, then, by Theorem \ref{thm:cbcriterion}, the cap body can be illuminated by $2d$ directions. If, however, there is a spherical cap that is not separated by the system, it must intersect every coordinate greatsphere. For every $G_i$, this cap is either tangent to $G_i$, or has its centre on $G_i$. 

    \begin{definition}
     A \textbf{$k$-tangent cap}, where $k \in \{2, \dots, d\}$ , is a spherical cap $C \subset \Ss$ such that the centre of $C$ lies on the intersection of $d-k$ coordinate greatspheres, and $C$ is tangent to each of the remaining $k$ coordinate greatspheres.
    \end{definition}
    
Note that $k \geq 2$ since a 1-tangent cap is a hemisphere, and the respective cap body is an unbounded cylinder that does not satisfy our definition of the convex body. 

Suppose a $k$-tangent cap $C$ has a spherical radius $r_c$ and its centre lies at a point $\ud_c$ with coordinates $(x_1, \dots, x_d)$. Since our cap body is unconditional, there are also caps congruent to the cap $C$ with coordinates $(\pm x_1, \dots, \pm x_d)$. From this set of caps we can pick the cap with non-negative centre coordinates, so we will assume $x_i \geq 0$.  If the cap's centre is on the greatsphere $G_i$, then $x_i=0$. If, however, the cap is tangential to a greatsphere $G_i$,  its spherical radius $r_c$  is equal to $\pi/2 - \angle(\ud_c, \ed_i)$, the angle between $\ud_c$ and the hyperplane $H_{\ed_i,0}$. Hence, $\sin r_c = \cos (\angle(\ud_c, \ed_i)) = x_i$. 

Without loss of generality suppose that the cap in question is tangential to greatspheres $G_1 \dots G_k$, and its centre lies on greatspheres $G_{k+1} \dots G_d$. Hence, coordinates of the cap centre $\ud_c$ would be $(\underbrace{\sin r_c, \dots, \sin r_c}_k , \underbrace{0, \dots, 0 }_{d-k})$. Since $\sum_{i=1}^{d} x_i^2 = 1$, it follows that $r_c = \arcsin \frac{1}{\sqrt{k}}$.

Suppose there are at least two distinct sets of $k$-tangent caps on a sphere: a set of $k_1$-tangent caps with spherical radii $r_1 = \arcsin \frac{1}{\sqrt{k_1}}$ and a set of $k_2$-tangent caps with radii $r_2 = \arcsin \frac{1}{\sqrt{q}}$, where $p,q \in \{2,3,\dots, d\}$. Let $C_1, C_2$ be the representatives of sets of, respectively, $k_1$-tangent and $k_2$-tangent caps. We pick them so their centres $\boldsymbol{c}_{1}=(x_1, \dots, x_d)$ and $\boldsymbol{c}_2 = (y_1, \dots, y_d)$ have non-negative coordinates. For the two sets of caps to form a packing, the interiors of $C_1$ and $C_2$ must not intersect. 

\begin{lemma}
\label{lem:2setscomp} If a set of $k_1$-tangent caps forms a packing with a set of $k_2$-tangent caps on a unit sphere $\Ss \subset \Ed$, the following inequality holds: 
\begin{equation}
    k_1+k_2-d \leq \sqrt{(k_1-1)(k_2-1)} - 1
\end{equation}
\end{lemma}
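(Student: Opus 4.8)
The plan is to reduce the packing hypothesis to a single inequality relating the inner product of the two cap centres to $\cos(r_1+r_2)$, and then to combine that with the trivial lower bound on the overlap of the two supports.

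First I would fix coordinates exactly as in the discussion preceding the lemma. Taking the representatives with non-negative centres, $\boldsymbol{c}_1$ has precisely $k_1$ nonzero coordinates, all equal to $1/\sqrt{k_1}$, supported on a set $S_1\subseteq\{1,\dots,d\}$ with $|S_1|=k_1$, and spherical radius $r_1=\arcsin(1/\sqrt{k_1})$; likewise $\boldsymbol{c}_2$ is supported on $S_2$ with $|S_2|=k_2$, nonzero entries $1/\sqrt{k_2}$, and radius $r_2=\arcsin(1/\sqrt{k_2})$. Put $m=|S_1\cap S_2|$. Since $|S_1\cup S_2|\le d$, inclusion--exclusion gives $m\ge k_1+k_2-d$; and since all coordinates involved are non-negative, $\langle\boldsymbol{c}_1,\boldsymbol{c}_2\rangle=\sum_{i\in S_1\cap S_2}\tfrac{1}{\sqrt{k_1 k_2}}=\tfrac{m}{\sqrt{k_1 k_2}}$.

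Next I would invoke the packing: the interiors of $C_1$ and $C_2$ are disjoint, so the spherical distance between $\boldsymbol{c}_1$ and $\boldsymbol{c}_2$ is at least $r_1+r_2$. As $k_1,k_2\ge 2$ we have $r_1,r_2\le\pi/4$, hence $r_1+r_2\le\pi/2$ and the cosine is decreasing on the relevant range, so $\tfrac{m}{\sqrt{k_1 k_2}}=\langle\boldsymbol{c}_1,\boldsymbol{c}_2\rangle\le\cos(r_1+r_2)$. Using $\sin r_i=1/\sqrt{k_i}$ and $\cos r_i=\sqrt{(k_i-1)/k_i}$, the angle-addition formula gives $\cos(r_1+r_2)=\tfrac{\sqrt{(k_1-1)(k_2-1)}-1}{\sqrt{k_1 k_2}}$. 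Cancelling $\sqrt{k_1 k_2}$ yields $m\le\sqrt{(k_1-1)(k_2-1)}-1$, and chaining with $k_1+k_2-d\le m$ gives exactly the asserted inequality.

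I do not expect a real obstacle here; the only points worth spelling out are (i) that among all sign variants of the two caps (all present by unconditional symmetry) the non-negative representatives already realise the largest inner product of centres, so it suffices to test the ``most dangerous'' sign pattern, and (ii) that $r_1+r_2\le\pi$, so spherical distance and inner product are related monotonically — both being immediate from $k_1,k_2\ge 2$. It is also worth recording that equality in the final bound forces $m=k_1+k_2-d$ and $S_1\cup S_2=\{1,\dots,d\}$, which is the source of the rigidity used in the subsequent case analysis of $k$-tangent configurations.
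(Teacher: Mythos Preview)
Your argument is correct and is essentially identical to the paper's own proof: both pick the non-negative representatives, compute $\langle \boldsymbol{c}_1,\boldsymbol{c}_2\rangle=m/\sqrt{k_1k_2}$ with $m\ge k_1+k_2-d$, translate the packing condition into $\langle \boldsymbol{c}_1,\boldsymbol{c}_2\rangle\le\cos(r_1+r_2)$, and evaluate $\cos(r_1+r_2)=\bigl(\sqrt{(k_1-1)(k_2-1)}-1\bigr)/\sqrt{k_1k_2}$ via the addition formula. Your extra remarks on why the non-negative sign pattern is the critical one and on the equality case are sound refinements but not substantive departures.
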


\begin{proof}If the interiors of $C_1$ and $C_2$ do not intersect, then the spherical distance between $\boldsymbol{c}_{1}$ and $\boldsymbol{c}_{2}$ is at least the sum of the caps spherical radii $r_1 + r_2$. This condition is equivalent to $\cos \langle \boldsymbol{c}_1, \boldsymbol{c}_2 \rangle \leq \cos(r_1 + r_2)$, as both $\angle (\boldsymbol{c}_1, \boldsymbol{c}_2) $ and $ r_1 + r_2$ are less than $\pi$. Now,  $\cos \langle \boldsymbol{c}_1, \boldsymbol{c}_2 \rangle = \sum_{i=1}^{d} x_i y_i = m \frac{1}{\sqrt{k_1k_2}}$ where $m$ is how many indices $i \in 1 \dots d$ have both $x_i \neq 0$ and $y_i \neq 0$. Since there is exactly $k_1$ non-zero $x$'s and $k_2$ non-zero $y$'s, it follows that $m \geq k_1+k_2-d $. Hence,

\begin{equation}
    \begin{split}
        \frac{k_1+k_2-d}{\sqrt{k_1k_2}} \leq m \frac{1}{\sqrt{k_1k_2}} \leq \cos(r_1 + r_2) \\
        = \cos \left( \arcsin\left( \frac{1}{\sqrt{k_1}}\right) + \arcsin\left( \frac{1}{\sqrt{k_2}}\right) \right) = \frac{\sqrt{k_1-1}\sqrt{k_2-1} - 1}{\sqrt{k_1k_2}}
   \end{split}
\end{equation}
\end{proof}

This condition is based on the optimal case of $m=k_1+k_2-d$, so it is necessary, but not sufficient.

\section {Proof of Theorem \ref{thm:S3}}

Using Lemma \ref{lem:2setscomp}, we can classify the unconditional cap bodies in $\Ee^4$ that are not illuminated by the eight directions $\pm \ed_i,  i \in \{1 \dots 4\}$, based on the $k$-tangent cap configuration. For each possible configuration we will demonstrate a system of four 2-greatspheres that separates the $k$-tangent caps and every other cap that forms a packing with the $k$-tangent caps.

Consider an arbitrary greatsphere $G = \Ss \cap H_{\ud, 0}$ and two spherical caps $C_1, C_2$ with centers $\Od_1$, $\Od_2$, and spherical radii $r_1, r_2$ such that $\Od_1, \Od_2, \ud \in \Ss$, and $r_1, r_2 \in (0, \pi/2)$. Cap $C_1$ is not separated by a greatsphere $G$ if and only if the spherical distance between $\ud$ and $\Od_1$ lies in the range $\left[ \pi/2 - r_1, \pi/2 + r_1 \right]$. This condition can be rewritten as 

\begin{equation}
\label{eq:capgsp}
\langle \ud, \Od_1 \rangle \in \left[-\sin r_1, \sin r_1 \right]
\end{equation}

Caps $C_1, C_2$ form a packing if and only if the spherical distance between their centers $O_1, O_2$ is no less than the sum of the radii $r_1 + r_2$. This condition is equivalent to

\begin{equation}\label{eq:capcap}
    \langle \Od_1, \Od_2 \rangle \leq \cos(r_1 + r_2)
\end{equation}

Consider two sets of $k_1$-tangent and $k_2$-tangent caps forming a packing on $\St \subset \mathbb{E}^4$.

Using Lemma \ref{lem:2setscomp} and cross-checking all 6 pairs of $k_1, k_2 \in \lacket 2,3,4 \racket$ shows that there is only one case with multiple $k$-tangent cap sets to consider, 2 sets of 2-tangent caps such that each coordinate greatsphere is tangential to caps from only one set. Thus we have a total of 4 cases to consider.

\subsection{Eight 2-tangent caps}

Without loss of generality let our two sets of 2-tangent caps have centre coordinates $\left(\pm \frac{1}{\sqrt2}, \pm \frac{1}{\sqrt2}, 0, 0 \right)$ and $\left(0,0,\pm \frac{1}{\sqrt2}, \pm \frac{1}{\sqrt2} \right)$. We will show that an arbitrary spherical cap packing that contains these 2-tangent caps, can be separated by the four greatspheres with the centers  $\left(\frac{1}{\sqrt2}, \pm \frac{1}{\sqrt2}, 0, 0 \right)$ and $\left(0,0, \frac{1}{\sqrt2}, \pm \frac{1}{\sqrt2} \right)$. 
These greatspheres are pairwise orthogonal, so any cap that is not separated by them, has to have a radius that is no less than $\arcsin 1/\sqrt4 = \pi/6$, the inradius of the spherical orthant on $\St$

We are looking for ``stranded'' points, the points with maximum spherical distance to the nearest $k$-tangent cap. If this distance is less than $\pi/6$, then we cannot fit a cap of a radius $\pi/6$ on a sphere around the initial $k$-tangent cap construction, and hence, no other cap can intersect all 4 greatspheres. This technique is helpful with the first two cases.

Consider a countable packing of caps $\left\{C_i \mid i \in I \right \}$ on the $\Ss$ sphere. For every point  $\pd \in \Ss$ and each cap $C_j, j \in I$ there is a non-negative spherical distance $d(\pd, C_j)$ between the point and each cap. For each cap $C_j$ we can construct its spherical Voronoi cell $V_j$ (see Fig. \ref{voronoi}) , a closed set of all the points on the sphere for which $C_j$ is the nearest cap, or one of the nearest caps:

$$V_j = \left\{ \xd \in \Ss \mid d(\xd, C_j) \leq d(\xd, C_i) \mbox{ for any } i \in I \right\} $$

\begin{figure}[h]
\centering
\includegraphics[width=5cm, keepaspectratio]{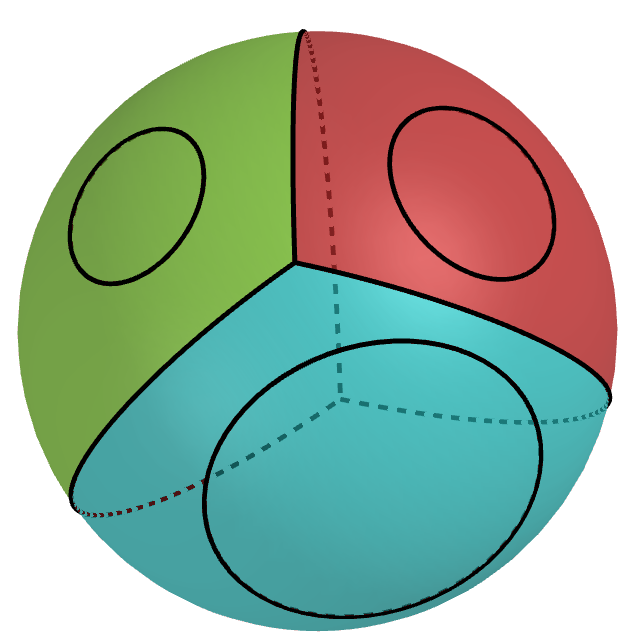}
\caption{Voronoi cells of three caps on $\mathbb{S}^2$}
\label{voronoi}
\end{figure}

All eight 2-tangent caps are images of one cap under the symmetry group that consists of reflections about coordinate hyperplanes and the mirror symmetry that maps $(x_1, x_2, x_3, x_4)$ to $(x_3, x_4, x_1, x_2)$, so all the Voronoi cells are congruent and are the same as the Voronoi cells of the cap centers. So any ``stranded'' point would have to be on a boundary of a cell, otherwise its distance to the nearest cap could be increased by moving the point slightly towards the boundary.

Since the Voronoi cells of all eight 2-tangent caps are congruent, we can consider arbitrary cap, like the one with the centre $\left(\frac{1}{\sqrt2}, \frac{1}{\sqrt2}, 0, 0 \right)$. A point $\boldsymbol{p} = (x_1, x_2, x_3, x_4)$  in its Voronoi cell is closer to $\left(\frac{1}{\sqrt2}, \frac{1}{\sqrt2}, 0, 0 \right)$ than to any of the points $\left(\pm \frac{1}{\sqrt2}, \pm \frac{1}{\sqrt2}, 0, 0 \right)$, or any of the points $\left(0, 0, \pm \frac{1}{\sqrt2}, \pm \frac{1}{\sqrt2} \right)$. It also, obviously, lies on $\Sp^2$. Writing down these conditions via inner product yields the following system:
\begin{equation}
    \begin{cases} \label{2t_x8_voron} x_1 + x_2 \geq \pm x_3 \pm x_4 \\ x_1 + x_2 \geq \pm x_1 \pm x_2 \\ \sum_{j=1}^{4} x_j^2 = 1 \end{cases} 
\end{equation}
 
Keeping these conditions in mind we want to maximize the angle between $\left(\frac{1}{\sqrt2}, \frac{1}{\sqrt2}, 0, 0 \right)$ and $(x_1, x_2, x_3, x_4)$, i.e., minimize the sum $x_1 + x_2$. Equation $x_1 + x_2 \geq \pm x_1 \pm x_2$ yields $x_1, x_2 \geq 0$.


Suppose $x_1+x_2 = A$, then $2A^2 \geq (x_1+x_2)^2 + (|x_3| + |x_4|)^2 = 1 + 2x_1x_2 + 2 |x_3x_4| \geq 1$ with equality attained, for example, with the point $\left(\frac{1}{\sqrt2}, 0 \frac{1}{\sqrt2}, 0 \right)$. Spherical distance between this point and the 2-tangent cap with the centre in $\left(\frac{1}{\sqrt2}, \frac{1}{\sqrt2}, 0, 0 \right)$  equals to $\pi/12$ which is less than $\pi/6$. So any other cap can not intersect all four greatspheres. 

\subsection{Sixteen 4-tangent caps}

In this case, the 4-tangent caps have centre coordinates $(\pm \frac12, \pm \frac12, \pm \frac12, \pm \frac12)$ and spherical radii $r = \arccos{1/\sqrt4} = \pi/6$. We will show that this configuration can be separated by the following four (d-2)-greatspheres: 
$$F_{1,2} = \left\{\xd \in \St \mid  \left\langle \xd,\left( \frac{1}{\sqrt2},  \pm \frac{1}{\sqrt2}, 0, 0 \right) \right\rangle = 0 \right\} $$ 
$$F_{3,4} = \left\{\xd \in \St \mid  \left\langle \xd,\left( 0,0, \frac{1}{\sqrt2},  \pm \frac{1}{\sqrt2} \right) \right\rangle = 0 \right\} $$

In this case, and the further cases, one can verify that the greatspheres separate the 4-tangent caps using the equation (\ref{eq:capgsp}). Next, we need to investigate all the possible caps with radius at least $\pi/6$ that would form a packing on the sphere. Once again, we can pick any cap, like the cap $C_1$ with centre at $O_1 = (\frac12,  \frac12,  \frac12, \frac12)$, and then the other 15 4-tangent caps are images of this cap under the group generated by the reflections about all the coordinate planes. Hence the Voronoi cells of all the 4-tangent caps are congruent and are the same as Voronoi cells of caps centres.

Here are the equations that characterize the Voronoi cell of $C_1$: 
\begin{equation}
    \begin{cases} x_1^2 + x_2^2 + x_3^2 + x_4^2 = 1 \\ x_1 + x_2 + x_3 + x_4 \geq \pm x_1 \pm x_2 \pm x_3 \pm x_4  \end{cases}
\end{equation}

These conditions describe the orthant with non-negative coordinates. Within this closed orthant we need to find the point that is farthest away from $(\frac12,  \frac12,  \frac12, \frac12)$, i.e. a point with minimum $x_1 + x_2 +x_3 +x_4$. 
\begin{equation}
    (x_1 + x_2 + x_3 + x_4)^2 = 1 + 2\sum_{i>j}x_ix_j \geq 1
\end{equation}

So $x_1 + x_2 + x_3 + x_4 \geq 1$ with equality being achieved on the points $\ed_i$, where $i \in 1 \dots 4$. These points are each $\pi/3$ away from $O_1$, so the distance between $C_1$ and $\ed_i$ is $\pi/6$. No other point in the orthant satisfies this condition, since all the other points have at least two positive coordinates.
The radius of any cap with its centre not in $\pm \ed_i$ would be strictly less than $\pi/6$, and it will not intersect all four greatspheres $F_1, \dots, F_4$.

 Caps with centers $(\pm 1, 0,0,0)$  and $(0, \pm 1,0,0)$ are separated by $F_1$, and caps with centers $(0,0,\pm 1,0)$ and $(0, 0,0, \pm 1)$ are separated by $F_3$. Hence we have a system of 4 greatspheres with mutually orthogonal normal vectors that separate all the possible caps of radius at least $\pi/6$, and there is no larger cap. All the caps with radii less than $\pi/6$ cannot intersect all four mutually orthogonal greatspheres simultaneously, and will also be separated.

\subsection{Four 2-tangent caps}

Here our cap configuration has four 2-tangent caps, $C_1, \dots, C_4$ with radii $\pi/4$. Without loss of generality, cap centre coordinates are $\left( \pm \frac{1}{\sqrt2}, \pm  \frac{1}{\sqrt2}, 0, 0 \right)$. To separate these caps we will use the greatspheres $F_{1,2} = \left\{\xd \in \St \mid  \left\langle \xd,\left( \frac{1}{\sqrt2}, \pm \frac{1}{\sqrt2}, 0, 0 \right) \right\rangle = 0 \right\} $, and the coordinate greatspheres $G_3, G_4$.

Greatspheres $F_1$ and $F_2$ separate the 2-tangent caps. Now we just have to make sure that all the other caps that would still form a packing with these caps, are separated by the greatspheres $\left\{F_1, F_2, G_3, G_4 \right\}$.

Suppose some cap $C_0$ has non-zero intersection with every greatsphere $F_1, F_2, G_3, G_4$. Let this cap have the spherical radius $r_0$, and centre coordinates $(x_1, x_2, x_3, x_4)$. We choose $C_0$ so that its centre coordinates are non-negative. Cap $C_0$ is not $k$-tangent, yet it has non-zero intersection with $G_3$ and $G_4$. So it has to have no intersection with either $G_1$ or $G_2$, without loss of generality, suppose it is $G_1$. Equation (\ref{eq:capgsp}) then yields $x_1 \geq \sin r_0$.

We will also use the fact that $C_0$ forms a packing with the 2-tangent caps $C_1 \dots C_4$. Writing down equation (\ref{eq:capcap}) leads to

\begin{equation} \label{eq:xx}
    \pm \frac{x_1}{\sqrt2} \pm \frac{x_2}{\sqrt2} \leq \cos \left(r_0 + \frac{\pi}{4} \right)
\end{equation}

Taking the non-negative signs at $x_1, x_2$ and simplifying the inequality (\ref{eq:xx}),  we get $x_1 + x_2 \leq \cos r_0 - \sin r_0$. This, together with the assumption that $x_1 > \sin r_0$ we can state that 
\begin{equation}
    0 \leq x_2 < \cos r_0 - 2 \sin r_0 
\end{equation}
Since $\cos r_0 - 2 \sin r_0$ monotonously decreases on the allowed range for $r_0$, it follows that $r_0 < \theta$ where $\cos \theta - 2 \sin \theta = 0$. Then $r_0 < \theta = \arcsin (1/\sqrt5) < \pi/6$ and the cap of the radius $r_0$ can not intersect all four mutually orthogonal greatspheres. So it has to be separated by at least one of $F_1, F_2, G_3, G_4$. 

\subsection{Eight 3-tangent caps}

For this case,  take a 3-tangent cap system $C_1, \dots, C_8$ with centre coordinates $\left( \pm \frac{1}{\sqrt3}, \pm \frac{1}{\sqrt3}, \pm \frac{1}{\sqrt3}, 0 \right)$ and all their radii equal to $r_p = \arcsin \frac{1}{\sqrt3}$. To separate these caps, we will use the same 2-greatsphere we have used in the previous case: $F_1, F_2, G_3, G_4$. Repeated use of the equation (\ref{eq:capgsp}) shows that that system does separate the 2-tangent caps. $C_1 \dots C_8$. 

Suppose there is a cap $C_0$ with radius $r_0$ and centre coordinates $(x_1, x_2, x_3, x_4)$ that is not separated by our 2-greatsphere system. Again, we pick it so that $x_i \geq 0$. Here are the conditions such a cap has to satisfy: 

\begin{itemize}

    \item It would have to form a packing with the 3-tangent caps. This condition is equivalent to 
\begin{equation} 
    \pm \frac{x_1}{\sqrt3} \pm \frac{x_2}{\sqrt3}\pm \frac{x_3}{\sqrt3} \leq \cos \left(r_0 + r_p \right)
\end{equation}
    Simplify the right part, and take all signs at $x_i$ to be positive for the strongest statement, and we get 
\begin{equation}\label{eq:wasone}
    x_1 + x_2 + x_3 \leq \sqrt2 \cos r_0 - \sin r_0 
\end{equation}    
    \item The cap $C_0$ is not 3-tangent, so at least one of the intersections $C_0 \cap G_1$, $C_0 \cap G_2$ is empty. Without loss of generality, suppose the cap does not intersect $G_1$. Hence 
\begin{equation}\label{eq:wastwo}
    x_1 > \sin r_0,~x_2 = 0 \mbox{ or } x_2 \geq \sin r_0 
\end{equation}
    \item From (\ref{eq:wasone}) and (\ref{eq:wastwo}) we get $0 \leq x_2 + x_3 <  \sqrt2 \cos r_0 - 2\sin r_0$, which yields the following estimate on $r_0$:
\begin{equation}\label{eq:was3}
    \sqrt2 \cos r_0 - 2\sin r_0 > 0 \Rightarrow r_0 < \arcsin 1/\sqrt3 
\end{equation}    
    \item $C_0$ intersects $G_3$ and $G_4$, so 
\begin{equation}\label{eq:was4}
    x_3 = 0 \mbox{ or } x_3 = \sin r_0,~  x_4 = 0 \mbox{ or } x_4 = \sin r_0 
\end{equation}
    \item Cap $C_0$ intersects $F_1$ and $F_2$. Writing down corresponding equation (\ref{eq:capgsp}) results in
\begin{equation}\label{eq:was5}
    x_1 + x_2 \leq \sqrt2 \sin r_0 
\end{equation}
    \end{itemize}
    
    Now we will show that a cap can not satisfy all these conditions simultaneously.
    
    Suppose $x_2 \neq 0$. Then $x_2 \geq \sin r_0$, which, together with (\ref{eq:was5}), yields $x_1 \leq (\sqrt2 -1) \sin r_0 < \sin r_0$, which contradicts (\ref{eq:was3}). So $x_2 = 0$.
    
    Suppose $x_3 \neq 0$. Then $x_3 = \sin r_0$. Then from (\ref{eq:wasone}) and (\ref{eq:wastwo}) we get $\sin r_0 < x_1 \leq \sqrt2 \cos r_0 - 2 \sin r_0$. It follows that $\sqrt2 \cos r_0 - 3 \sin r_0 >0$. Then $r_0 < \arcsin (\sqrt2 / \sqrt{11}) < \pi/6$, and then cap $C_0$ can not intersect four pairwise orthogonal 2-greatspheres. So $x_3 = 0$.
    
   Since the cap centre lies on $\St$, we get $x_1^2 + x_4^2 = 1$, and from (\ref{eq:was3}),(\ref{eq:was5}) we get $x_1 < \sqrt2/\sqrt3$. Hence, $x_4=\sin r_0$, because th only other option is $x_4 = 0$, which means $x_1 = 1$, contradicting $x_1 < \sqrt2/\sqrt3$. So $x_4 = \sin r_0$.     
    But then $x_4^2 = 1 - x_1^2 > 1/3$, which is incompatible with $r_0 < \arcsin 1/\sqrt3$. Hence there can not be a cap $C_0$ that is distinct from a system of 3-tangent caps, forms a packing with those caps, and is not separated by 2-greatspheres $F_1, F_2, G_3, G_4$.
    
    Now, for any possible unconditional packing of spherical caps on $\Sp^3$ we have shown that there are four pairwise orthogonal 2-greatspheres that separates every cap in the packing. That concludes the proof of Theorem $\ref{thm:S3}$. 
    
    \section{Concluding Remarks}
    
    So far we have failed to find the cap body in $\Ee^3$ with an illumination number higher than 6. We suppose, that 6 is, indeed, the upper estimate for the three-dimensional cap body illumination number.
    
    The illumination estimates we have obtained for the cap bodies with symmetry are sharp. However, we have not completely characterized the centrally symmetric cap bodies that require precisely 6 illumination directions in $\Ee^3$, nor the unconditional cap bodies with illumination number 8 in $\Ee^4$. 
    
    \bigskip
    
    On behalf of all authors, the corresponding author states that there is no conflict of interest.

\bibliographystyle{plain}
\bibliography{references} 

\vfill

\noindent Ilya Ivanov 

\noindent \small{Department of Mathematics and Statistics, University of Calgary, Canada}

\noindent \small{E-mail: \texttt{ilya.ivanov1@ucalgary.ca}}

\bigskip

\noindent and

\bigskip

\noindent Cameron Strachan

\noindent \small{Department of Mathematics and Statistics, University of Calgary, Canada}

\noindent \small{E-mail: \texttt{braden.strachan@ucalgary.ca}}

\end{document}